\numberwithin{equation}{section}
\theoremstyle{plain}
\newtheorem{thm}{Theorem}[section]
\newtheorem{cor}{Corollary}[section]
\newtheorem{prop}{Proposition}[section]
\theoremstyle{definition}
\newtheorem{definition}{Definition}[section]
\newtheorem{remark}{Remark}[section]
\newtheorem{example}{Example}[section]
\newcommand{\qede}{\hfill $\diamond$}
\newcommand{\E}{\mathcal E}
\newcommand{\R}{\mathbb R}
\newcommand{\Q}{\mathbb Q}
\newcommand{\N}{\mathbb N}
\newcommand{\Z}{\mathbb Z}
\newcommand{\conv}{\mathrm{conv}}
\newcommand{\dist}{\mathrm{dist}}
\renewcommand{\int}{\mathrm{int}}
\newcommand{\eps}{\varepsilon}
\begin{document}

\title{Some remarks on convex analysis in topological groups}

\dedicatory{Dedicated to the memory of Jean-Jacques Moreau}

\author{Jonathan M. Borwein \and Ohad Giladi}
\address{Centre for Computer-assisted Research Mathematics and its Applications (CARMA), School of Mathematical and Physical Sciences, University of Newcastle, Callaghan, NSW 2308, Australia}
\email{jonathan.borwein@newcastle.edu.au, ohad.giladi@newcastle.edu.au}

\thanks{This research is supported in part by The Australian Research Council}

\begin{abstract}
We discuss some  key results from convex analysis in the setting of topological groups and monoids. These include separation theorems, Krein-Milman type theorems, and minimax theorems.
\end{abstract}

\maketitle

\section{Introduction}

\subsection{Background}
A topological group is a group which is also a topological space, such that the group operations are continuous. In this note we consider only commutative groups. Similarly, a topological monoid is a monoid (i.e., commutative semigroup with unit), which is also a topological space, such that the addition operation is continuous. In \cite{BG15} the present authors proposed a natural convexity structure for  groups and monoids that coincides with the classical notion when the underlying structure is a  vector space. It is then natural to ask when known algebraic or topological  results for  vector spaces still hold true in a  group or monoid. We should note that if a semigroup does not have a natural identity we simply add one.

Earlier related work is to be found in  in~\cite{BG15, CT96, Mor63, Mor63Book, Kin93, Par10, Pon14, XXC13}, among other authors.  
It is appropriate to point out that Moreau~\cite{Mor63, Mor63Book} studies the \emph{infimal convolution} in a monoid. For extended real-valued functions $f$ and $g$  he defines the inf-convolution by
\begin{align}\label{def inf} f \,\square\, g (x) = \inf_{y+z=x} \big[f(y)+g(z)\big],\end{align}
and  observes that for subadditive functions, the monoid provides the appropriate level of generality wherein to study infimal convolution.

Our own motivation is discussed in \cite{BG15} where also various  illustrative examples are given and  which provided a variety of primarily algebraic results.

In this note we study two topological topics. First, we look at topological separation theorems and their consequences, including a group-theoretic version of the Krein-Milman theorem and Milman's converse theorem. To prove adequate separation theorems, we define and study a group version of the well-known gauge functional. We show that in many respects it behaves similarly to the case of locally convex topological vector spaces. (See Section~\ref{sec separate}.) Then in Section~\ref{sec km} we use the separation results to prove a version of the Krein-Milman theorem for locally convex topological groups. We note that some versions of the Krein-Milman theorem have also been studied in the case of topological monoids/lattice structures, e.g., in~\cite{Pon14}.

Second, we look at the classical minimax theorem. The first proof was due to Von-Neumann~\cite{Neu28}, and later  generalisations and different proofs appeared in~\cite{B15, Fan53,  Sio58,Kin05,BZ86} and elsewhere. Herein we show that using the results of~\cite{Fan53}, one can easily deduce a satisfactory minimax theorem for appropriate topological monoids. (See Section~\ref{sec minimax}.)

\subsection{Locally convex topological groups basics}
We begin with some basic definitions. For more about topological groups, see for example~\cite{HR79}.

\begin{definition}[Topological group]
A group endowed with a topologiy is said to be a topological group if the group operations are continuous. That is, the function $(x,y)\mapsto x-y$ is continuous. 
\end{definition}

We also require a definition of a topological monoid.

\begin{definition}[Topological monoid]
A monoid which is also a topological space is said to be a topological monoid if the addition operation is continuous.
\end{definition}

While the notion of convexity is usually studied in the context of vector spaces, it can be defined and studied in a very general setting. We refer the reader to ~\cite{Vel93} for more about abstract convexity, and to~\cite{BG15} for more about convexity in groups and monoids. In particular, given a space with an abstract collection of convex sets, we can define the following notion.

\begin{definition}[Locally convex topological space]
A topological space is said to be locally convex if its topology has a basis which contains only convex sets.
\end{definition}

In a topological group $X$ we have that for every $x_0\in X$, the map $x\mapsto x+x_0$ as well as its inverse $x\mapsto x-x_0$ are continuous (it suffices to assume only the latter). In particular, it follows that if $U$ is a neighbourhood of $x\in X$, then by the continuity, $U-x$ is a neighbourhood of 0. Thus, any neighbourhood of $x$ can be written as $x+U$, where $U$ is a neighbourhood of 0. Note that this is \emph{not} the case for arbitrary topological monoids. This is evident  if we consider only  the simple example $X= \R$ with the operation $x\wedge y$, that is taking the minimum. 

It is known that if $X$ is a topological group and the topology is Hausdorff, then singletons are closed sets. Indeed,  in topological groups, the $T_1$ and Hausdorff  properties are equivalent. In this note, for these and other reasons all topological groups will assumed to be Hausdorff. 

By the maximum formula~\cite[Theorem 3]{BG15}, it follows that every finite convex function on a semidivisible group is equal to the supremum over its additive minorants. Thus, locally convex topological groups admit `many' additive functions. This is in contrast to the non-locally convex case, for example in the  topological vector space $L_p([0,1])$, $p\in (0,1)$.

\begin{prop}\label{prop singleton}
Assume that $X$ is a locally convex, $T_1$ topological group. Then all singletons are convex, and no elements have finite order. In particular, the group has at most unique divisors.
\end{prop}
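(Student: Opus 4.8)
The plan is to prove the three assertions in order, since each feeds the next. First, singletons being convex: this should follow directly from the convexity structure of \cite{BG15}, since in any reasonable convexity on a group the convex hull of a point is the point itself (this is essentially built into the axioms of an abstract convexity). Alternatively, if convexity is defined via midpoints and divisibility as in \cite{BG15}, one checks that $\{x\}$ trivially satisfies the defining closure condition. I expect this step to be essentially immediate from the definitions in the cited paper.

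Next, that no element has finite order. Suppose for contradiction that $x \neq 0$ has finite order $n$, so $nx = 0$. The idea is to use local convexity together with the $T_1$ (hence Hausdorff) hypothesis to derive a contradiction. Since $X$ is $T_1$, the set $\{0\}$ is closed, so there is a convex neighbourhood $U$ of $0$ with $x \notin U$. Now I would exploit the fact that local convexity of a topological group forces the convex neighbourhoods of $0$ to absorb the cyclic subgroup generated by $x$ in a controlled way: because the subgroup $\langle x \rangle = \{0, x, 2x, \dots, (n-1)x\}$ is finite, its convex hull is a finite set containing $0$ and $x$, yet the additive/midpoint structure of convexity combined with $nx = 0$ should force $x$ itself to lie in every convex set containing $0$. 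Concretely, one uses that a convex set containing $0$ that is also closed under the group-divisibility operations must, when intersected with $\langle x\rangle$, be a subgroup — and the only subgroups of a finite cyclic group containing a generator is the whole group. This contradicts $x \notin U$. The cleanest route is probably to invoke the maximum formula / additive minorant result quoted just before the proposition: local convexity yields enough additive (homomorphism-valued) functionals, and a homomorphism into $\R$ (or into the relevant ordered structure) must kill every torsion element; but if $x$ is torsion and nonzero, no additive functional separates it from $0$, contradicting the separation that local convexity + $T_1$ provides.

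Finally, "at most unique divisors": here I would argue that if $y$ and $y'$ both satisfy $ny = ny' = x$ for some $n$, then $n(y - y') = 0$, so $y - y'$ is a torsion element, hence $y - y' = 0$ by the previous part. Thus divisors, when they exist, are unique, which is exactly the claim.

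The main obstacle I anticipate is the middle step: pinning down precisely why local convexity plus $T_1$ rules out torsion. The separation theorems of Section~\ref{sec separate} are the natural tool, but they are stated later, so within the logic available here one must either use only the maximum formula already quoted (additive minorants separating points from closed convex sets) — which requires checking that a single point is closed and convex, both of which we have — or give a direct argument with convex neighbourhoods. I would lead with the additive-functional argument: a nonzero torsion element cannot be separated from $0$ by any additive function (since additive functions vanish on torsion), yet the maximum formula guarantees such separation for distinct points in a locally convex $T_1$ group; this contradiction is the heart of the proof, and the other two parts are short deductions around it.
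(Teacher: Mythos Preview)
Your first step contains a genuine gap. In the convexity of \cite{BG15} used here, singletons are \emph{not} automatically convex: by the definition recalled in Section~\ref{sec separate}, $\{x\}$ is convex precisely when $my=mx$ forces $y=x$ for every $m\in\N$, i.e.\ precisely when multiplication by integers is injective. So ``$\conv(\{x\})=\{x\}$'' is not built into the axioms; it is equivalent to the torsion-freeness you are trying to prove, and you cannot simply assert it. The paper's argument for this step genuinely uses both hypotheses: given $y\neq x$, the $T_1$ property together with local convexity furnishes an open \emph{convex} $U$ with $x\in U$ and $y\notin U$; since $\conv(\{x\})\subseteq U$, this excludes $y$ from $\conv(\{x\})$, and as $y$ was arbitrary, $\conv(\{x\})=\{x\}$.

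For the torsion-free step, your additive-functional route is not available under the stated hypotheses: the maximum formula you invoke requires semidivisibility, which is not assumed in this proposition, and the separation theorems of Section~\ref{sec separate} need further hypotheses (connectedness, etc.) and in any case come later. The paper's argument is a one-liner once singletons are known to be convex: if $x\neq 0$ had order $m$, then $m\cdot 0 = 0 = mx$, so by the definition of convexity $0\in\conv(\{x\})=\{x\}$, a contradiction. (Equivalently, and closer to your own setup: with $U$ convex, $0\in U$, $x\notin U$, the relation $mx = m\cdot 0$ and convexity of $U$ force $x\in U$ directly.) Your final deduction of unique divisors from torsion-freeness is correct and matches the intended reading of ``in particular''.
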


 By considering the discrete topology, the previous result implies that in a locally convex group points are convex (resp. closed) iff the topology is Hausdorff.

\begin{proof}[Proof of Proposition~\ref{prop singleton}]
Let $x\in X$. If $y\neq x$ then since the topology is $T_1$, there exists $U$ open and convex, such that $x\in U$, $y\notin U$. Since $\conv(\{x\}) \subseteq U$, the first assertion follows. To prove the second assertion, suppose that $x \neq 0$, and note that if $C\subseteq X$ is convex, $x\in C$ and $x$ is of finite order, then there exists $m\in \N$ such that $mx = 0 = m\cdot 0$. Thus, $0\in \conv(\{x\})$, which can happen only if $\{x\}$ is not convex.
\end{proof}

Also, recall the following definitions.

\begin{definition}[Semidivisible monoid]
A monoid $X$ is said to be $p$-semidivisible if there exists $p\in \N$ prime such that $pX=X$. That is, for every $x\in X$ there exists $y\in X$ such that $x=py$. 
A monoid is said to be divisible if it $p$-semidivisible for every $p\in \N$ prime. 
\end{definition}

Note that if $X$ is $p$-divisible and $q$-divisible  then it is $p^n q^m$-divisible for $m,n \in \N$.

\begin{definition}[Uniquely divisible monoid] 
A monoid $X$ is said to be uniquely divisible if it is divisible and for every $n \in \N$, the map $x\mapsto nx$ is injective.
\end{definition}

It is known that torsion-free divisible abelian groups are modules over $\Q$ \cite{BG15}, and so the class of merely semidivisible monoids and groups is a much larger and potentially richer one. We recall the following example of a semidivisible  groups which is not divisible. These examples appeared already in~\cite{BG15}, but now they can be usefully considered in the context of locally convex topological groups.

\begin{example}[$\sigma$-algebra with symmetric difference and a measure as a distance]\label{ex symm measure}
Let $(\Omega,\mathcal F,\mu)$ be a measure space, that is, $\Omega$ is a set, $\mathcal F$ is a $\sigma$-algebra of subsets of $\Omega$, and $\mu$ is a positive measure on elements in $\mathcal F$. For $A,B\in \mathcal F$, define $A+B = A\triangle B$. Then it is known (see for example~\cite{BG15}) that under this operation, $A+B=B+A$, $A+\emptyset = \emptyset+A = A$ and $A+A=\emptyset$. Also, it is known that if $\mathcal A \subseteq F$, then
\begin{align}\label{conv is sum}
\conv(\mathcal A) = \left\{A\subseteq X~\left| ~A = \sum_{i=1}^nA_i, ~~ A_i\in \mathcal A, ~~ n\in \N\right.\right\}.
\end{align}
Also, for $A,B\in \mathcal F$, define $d_{\mu}(A,B) = \mu(A\triangle B)$. Then it is known that $d_{\mu}(\cdot,\cdot)$ is a pseudo-metric on $\mathcal F$. Therefore, let $X = \mathcal F / \sim$, where $A\sim B \iff \mu(A\triangle B) = 0$. Assume that $A_n\to A$, $B_n\to B$ in $X$, that is $\mu(A_n\triangle A) \to 0$, $\mu(B_n\triangle B) \to 0$. Then we have $\mu\big((A_n\triangle B_n)\triangle (A\triangle B)\big) = \mu\big((A_n\triangle A)\triangle (B_n\triangle B)\big) \stackrel{(*)}{\le} \mu(A_n\triangle A)+\mu(B_n\triangle B) \to 0$, where in ($*$) we used the fact that for every sets $A,B$, we have $\mu(A\triangle B) = \mu\big((A\cup B)\setminus (A\cap B)\big) \le \mu(A\cup B) \le \mu(A)+\mu(B)$. Therefore, it follows that $A_n+B_n \to A+B$. Since $B_n=-B_n$ and $B=-B$, we also have $A_n-B_n \to A-B$, which shows that $X$ is indeed a topological group. On the other hand, in general $X$ is not locally convex. To see this, consider the example where $\Omega = [0,1]$, $\mathcal F = \mathcal B([0,1])$, that is, the Borell sets on $[0,1]$ and $\mu$ is the Lebesgue measure. Then for $\eps>0$, the set $\mathcal A_\eps = \big\{A\in \mathcal F~\big|~\mu(A)<\eps\big\}$ is a neighbourhood of $\emptyset$. However, by~\eqref{conv is sum}, it follows that if we choose $A_i = \big((i-1)\eps/2, i\eps/2\big)$, $1\le i \le n$, where $n = \left\lfloor \frac \eps 2\right\rfloor +1$. Note that the sets $A_i$ are disjoint. Thus, we have
\begin{align*}
A = \sum_{i=1}A_i \stackrel{(*)}{=} \bigcup_{i=1}^nA_i = [0,1],
\end{align*}
where in ($*$) we used the fact that for disjoint sets $A_1,\dots,A_n$, we have $\sum_{i=1}^nA_i = \bigcup_{i=1}^nA_i$. In particular, we have $[0,1]\in \conv(\mathcal A_\eps)$ but since $[0,1]\notin \mathcal A_\eps$, it follows that $\mathcal A_\eps$ is not convex, and so $X$ is not locally convex.

The group $X$ is connected. Indeed, let $\mathcal A\subseteq X$ be the connected component that contains $\emptyset$. In particular, $\mathcal A$ is open. Taking 
\[\mathcal A_\eps = \big\{B\in\mathcal F~\big|~ \mu(A\triangle B) <\eps \text{ for some }~ A\in \mathcal A\big\},\]
since $d_{\mu}(\cdot,\cdot)$ is a distance on $X$, it follows that $\mathcal A_\eps$ is open. Hence, we must have $\mathcal A=X$ and so $X$ is connected.
\qede
\end{example}

\begin{example}[Positive hyperbolic group]\label{ex hyper}
Let $X$ be the commutative group of matrices of the form $M(\theta)$, where $\theta\in \R$, and 
\[M(\theta) = \left[\begin{array}{cc}\cosh(\theta) & \sinh(\theta) \\ \sinh(\theta) & \cosh(\theta)\end{array}\right].\]
This can be thought of the `positive' branch of the group $X_{\R}$, as defined in~\cite{BG15}. See also Remark~\ref{rmk hyper} below. The group operation is given by the matrix multiplication. It follows that we have $M(\theta_1)\cdot M(\theta_2) = M(\theta_1+\theta_2)$. The topology on $X$ is the topology induced by the euclidean metric in $\R^4$. The function $\theta \mapsto M(\theta)$ is continuous. Thus, the group $X$ is connected. Note also that this group is divisible, as we have $M(\theta) = \big(M(\theta/n)\big)^n$ for every $\theta\in \R$ and every $n\in \N$. $X$ is also locally convex, since if $x=M(\theta)\in X$, let 
\[U(\theta,\eps) = \big\{M(\theta')~\big|~|\theta'-\theta|<\eps\big\},\] 
for some $\eps >0$. Then $U(\theta,\eps)$ is an open and convex neighbourhood of $x$. To show that $U(\theta,\eps)$ is open, let $M(\theta')\in U(\theta,\eps)$, and $M(\theta'')\in X$ such that 
\begin{align}\label{small dist c4}
\dist_{\mathbb R^4}\big(M(\theta'),M(\theta'')\big) \le \eps'.
\end{align}
We would like to show that if $\eps'$ is sufficiently small, $M(\theta'')\in U(\theta,\eps)$. Indeed,~\eqref{small dist c4} implies in particular that 
\begin{align}\label{diff small}
\big|\sinh(\theta')-\sinh(\theta'')\big| \le \eps'.
\end{align} and so we have
If $\eps'$ is sufficiently small, then since $\sinh(\cdot)$ is continuous and injective, we have that $|\theta''-\theta|<\eps$. Altogether, we have $M(\theta'')\in U(\theta,\eps)$ and so $U(\theta,\eps)$ is open. To show that $U(\theta,\eps)$ is convex, let $M(\theta_1),\dots,M(\theta_n)\in U(\theta,\eps)$, $m_1,\dots,m_n\in \N$ and assume 
\begin{align}\label{conv condition}
m\big(M(\theta)\big) = \sum_{i=1}^nm_iM(\theta_i), ~~m=\sum_{i=1}^nm_i.
\end{align} 
Now, if we have that $M(\theta) = M(\theta')$ then by comparing all the entries of the two matrices, it follows that $\theta=\theta'$. Hence,~\eqref{conv condition} implies that $m\theta = \sum_{j=1}^nm_j\theta_j$. Since $\theta_1,\dots,\theta_n\in (\theta'-\eps, \theta'+\eps)$ and $(\theta'-\eps, \theta'+\eps)$ is a convex subset of $\R$, it follows that $\theta \in (\theta'-\eps, \theta'+\eps)$, which proves that $U(\theta,\eps)$ is convex. Given any open neighbourhood $U$ of $M(\theta)$, then again since the topology on $X$ is the topology induced by the metric in $\R^4$ and since $\sinh(\cdot)$ and $\cosh(\cdot)$ are continuous, there exists $\eps>0$ such that $U(\theta,\eps) \subseteq U$. This shows that the group $X$ is locally convex. 
\qede
\end{example}

\begin{remark}\label{rmk hyper}
If we consider the group
\[X_{\R} = \Big\{e^{it}M(\theta)~\Big|~ \theta,t \in \R\Big\},\]
where $M(\theta)$ is as in Example~\ref{ex hyper}. This is a group under the matrix multiplication. See~\cite{BG15} for the details. Let the topology on $X_{\R}$ be the topology induced by the euclidean metric on $\mathbb C^4$. Then $X_{\R}$ is connected as it is the image of the continuous map $(t,\theta)\mapsto e^{it}M(\theta)$. On the other hand, $X_{\R}$ is not locally convex. To see this, let $U$ be an open neighbourhood of $M(0)$. Since the topology is the topology on $\mathbb C^4$, there must be $\eps>0$ such that $\big\{e^{it}~\big|~|t|<\eps\big\}\subseteq U$. However, we have
\begin{align}\label{circle not convex}
\mathrm{conv}\left(\big\{e^{it}~\big|~|t|<\eps\big\}\right) = \big\{e^{it}~\big|~ t\in [0,2\pi)\big\}.
\end{align}
See~\cite{BG15} for a more detailed study of convexity in the circle group. In particular,~\eqref{circle not convex} implies that there is no $U'$ convex and open such that $M(0)\in U'$ and $U'\subseteq U$. Hence $X_{\R}$ is not locally convex.
\end{remark}

\section{Separation theorems in groups and monoids}\label{sec separate}

\subsection{Convexity in groups and monoids}
For the sake of completeness, we present some basic facts that appeared in~\cite{BG15}. First, we define convex sets in monoids.

\begin{definition}[Convex set]
Let $X$ be a monoid and $A\subseteq X$. $A$ is said to be convex, if for every $x_1,\dots,x_n\in A$, every $m_1,\dots,m_n\in \N=\{1,2,\ldots\}$ such that $\sum_{i=1}^nm_i=m$, we have
\begin{align*}
mx = \sum_{i=1}^nm_ix_i \Longrightarrow x\in A.
\end{align*}
\end{definition}

Next, we define some classes of functions on monoids.

\begin{definition}[Convex and concave functions]
Let $X$ be a monoid. A function $f:X\to[-\infty,\infty]$ is said to be convex if for every $x_1,\dots,x_n\in X$, $m_1,\dots,m_n\in \N$ such that $m=\sum_{i=1}^nm_i$ and $mx=\sum_{i=1}^nm_ix_i$, we have 
\begin{align*}
mf(x) \le \sum_{i=1}^nm_if(x_i).
\end{align*}
A function $f:X\to [-\infty,\infty]$ is said to be concave if the function $-f$ is convex.
\end{definition}

Here and in what follows, we let $\infty-\infty = +\infty$ when considering a convex function and $\infty-\infty = -\infty$ when considering a concave function.

\begin{definition}[Generalised affine functions]
Let $X$ be a monoid. A function $f:X\to[-\infty,\infty]$ is said to be affine if it is both convex and concave.
\end{definition}

\begin{definition}[Subadditive functions]
Let $X$ be a monoid. A function $f:X\to[-\infty,\infty]$ is said to be subadditive if for every $x,y\in X$, we have
\begin{align*}
f(x+y) \le f(x)+f(y).
\end{align*}
\end{definition}

\begin{definition}[$\N$-sublinear functions]
Let $X$ be a monoid. A function $f:X\to[-\infty,\infty]$ is said to be $\N$-sublinear if it is subadditive and in addition it is positively homogeneous, i.e.,
\begin{align*}
f(kx) = kf(x), ~~ k\in \N\cup\{0\},~ x\in X.
\end{align*}
\end{definition}

Note that all the classes of functions defined above can be defined when $X$ is a group rather than a monoid. More generally, the above classes can be defined when $X$, as well as the range are semimodules. For the sake of concreteness, we do not include the most general case. See~\cite{BG15} for the definitions in  generality. We now study some of the properties of the inf-convolution, defined in~\eqref{def inf}.

\begin{prop}[Inf-convolution of subadditive functions, Moreau]\label{prop inf subadd}
Let $X$ be a monoid and assume that $f,g:X\to[-\infty,\infty]$ are subadditive. Then $f\,\square\,g$ is subadditive.
\end{prop}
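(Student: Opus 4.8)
The plan is to prove subadditivity of $f \,\square\, g$ directly from the definition, unwinding the infimum. Fix $x_1, x_2 \in X$ and set $x = x_1 + x_2$; we must show $(f\,\square\,g)(x_1+x_2) \le (f\,\square\,g)(x_1) + (f\,\square\,g)(x_2)$. The natural approach is to take arbitrary decompositions witnessing (values close to) the two infima on the right, namely $x_1 = y_1 + z_1$ and $x_2 = y_2 + z_2$, and to use them to build a single decomposition of $x$ that controls the infimum on the left.

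First I would write $x_1 + x_2 = (y_1 + y_2) + (z_1 + z_2)$, which is a valid decomposition of $x$ into two summands because $X$ is commutative. Applying the definition of inf-convolution at $x$ with this particular decomposition gives
\begin{align*}
(f\,\square\,g)(x_1+x_2) \le f(y_1+y_2) + g(z_1+z_2).
\end{align*}
Now invoke subadditivity of $f$ and of $g$ separately: $f(y_1+y_2) \le f(y_1) + f(y_2)$ and $g(z_1+z_2) \le g(z_1) + g(z_2)$. Rearranging the four terms (again using commutativity, now in the range $[-\infty,\infty]$, with the convention $\infty - \infty = +\infty$ in force for convex-type functions) yields
\begin{align*}
(f\,\square\,g)(x_1+x_2) \le \big[f(y_1)+g(z_1)\big] + \big[f(y_2)+g(z_2)\big].
\end{align*}
Finally, take the infimum over all decompositions $x_1 = y_1 + z_1$ on the right-hand side, then over all decompositions $x_2 = y_2 + z_2$; the left-hand side does not depend on these choices, so it is bounded above by $(f\,\square\,g)(x_1) + (f\,\square\,g)(x_2)$, which is the desired inequality.

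The only genuine subtlety, and the step I would treat most carefully, is the bookkeeping around the extended-real arithmetic: the infima may be $-\infty$ or $+\infty$, and one must check that passing to the infimum on a sum of two terms (each itself an infimum) is legitimate, i.e.\ that $\inf_{a}\inf_{b}\big(\phi(a)+\psi(b)\big) = \big(\inf_a \phi(a)\big) + \big(\inf_b \psi(b)\big)$ in $[-\infty,\infty]$ under the stated convention. This is routine but is the one place where the monoid structure (as opposed to a group) and the value conventions actually matter; everything else is just commutativity plus the hypotheses. No divisibility or topology is needed, so the proof is short.
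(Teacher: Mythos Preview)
Your proof is correct and follows essentially the same route as the paper's: decompose $x_1=y_1+z_1$, $x_2=y_2+z_2$, combine into a decomposition of $x_1+x_2$, apply subadditivity of $f$ and $g$, then take infima. The only point the paper makes explicit that you leave implicit is that at least one decomposition $x_i=y_i+z_i$ always exists (take $z_i=0$), so the infimum is over a nonempty set.
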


\begin{proof}
Let $x_1,x_2\in X$. Assume that we can write $x_1=y_1+z_1$, $x_2=y_2+z_2$. This is always possible since we can choose one of the elements to be $0$. Thus we have $x_1+x_2 = (y_1+y_2)+(z_1+z_2)$, and so
\begin{align}\label{ineq subadd}
\nonumber f\,\square \, g(x_1+x_2) &\le f(y_1+y_2)+g(z_1+z_2) 
\\ & \le \big[f(y_1)+g(z_1)\big]+\big[f(y_2)+g(z_2)\big].
\end{align}
Taking the infimum over the right side of~\eqref{ineq subadd}, the result follows.
\end{proof}

Under the assumption that $X$ is semidivisible, we can obtain much stronger convexity results regarding $f\,\square\, g$.

\begin{prop}[Inf-convolution of convex and $\N$-sublinear functions]
Let $X$ be a $p$-semidivisible monoid and assume that $f,g:X\to [-\infty,\infty]$ are convex (resp. $\N$-sublinear). Suppose, moreover, that $X $ has at most unique divisors  as holds in the locally convex case.
Then $f\,\square\, g$ is convex (resp. $\N$-sublinear).
\end{prop}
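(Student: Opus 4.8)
The plan is to mirror the proof strategy of Proposition~\ref{prop inf subadd}, but now exploiting semidivisibility to handle the convexity inequality, which involves multiplicities $m_i \in \N$ rather than just pairwise addition. So suppose $x_1,\dots,x_n \in X$ and $m_1,\dots,m_n \in \N$ satisfy $mx = \sum_{i=1}^n m_i x_i$ with $m = \sum_{i=1}^n m_i$; I want to show $m\,(f\,\square\,g)(x) \le \sum_{i=1}^n m_i (f\,\square\,g)(x_i)$. Fix, for each $i$, a decomposition $x_i = y_i + z_i$. The natural guess is that $mx$ decomposes as $\big(\sum m_i y_i\big) + \big(\sum m_i z_i\big)$, and if I could \emph{divide this by} $m$ to get a decomposition $x = y + z$ with $my = \sum m_i y_i$ and $mz = \sum m_i z_i$, then convexity of $f$ and $g$ would give $m f(y) \le \sum m_i f(y_i)$ and $m g(z) \le \sum m_i g(z_i)$, whence $m\,(f\,\square\,g)(x) \le m f(y) + m g(z) \le \sum m_i [f(y_i)+g(z_i)]$, and taking the infimum over the decompositions $x_i = y_i+z_i$ finishes the convex case. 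The $\N$-sublinear case then follows since an $\N$-sublinear function is in particular convex (subadditive + positively homogeneous forces the convexity inequality), so $f\,\square\,g$ is convex; subadditivity is Proposition~\ref{prop inf subadd}; and positive homogeneity of $f\,\square\,g$ is a short direct computation using $f(kx)=kf(x)$, $g(kx)=kg(x)$ together with the fact that $k\cdot$ is a bijection so every decomposition of $kx$ comes from one of $x$ scaled.

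The crux — and the only place the hypotheses on $X$ are used — is producing that common ``quotient by $m$'' decomposition $x = y+z$. Here is where $p$-semidivisibility enters: first reduce to the case $m = p^k$ for some $k$. One way is to multiply the original relation through by a suitable factor: since $f$ and $g$ (and hence, once established, $f\,\square\,g$) satisfy the convexity inequality for a given multiplicity system if and only if they satisfy it after scaling all multiplicities by a common positive integer $N$ (the two sides just get multiplied by $N$), I may replace $m_i$ by $N m_i$ and $m$ by $Nm$; choosing $N$ so that $Nm$ is a power of $p$ is impossible in general, so instead I argue directly. Given $w := \sum_i m_i y_i$, I need $y$ with $py' = $ (iterated) leading eventually to $my = w$. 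Since $X$ is $p$-semidivisible, $pX = X$, hence $p^k X = X$ for all $k$, so I can always solve $p^k y = w$. To get division by a general $m$, note $m$ need not be a power of $p$ — so the genuinely correct route is: the relation $mx = \sum m_i x_i$ already \emph{exhibits} $x$, and I should instead find $y,z$ directly. Because $X$ has at most unique divisors, if such $y$ exists it is unique, which is what lets the two halves ($y$-part and $z$-part) be chosen \emph{consistently}. Concretely: by $p$-semidivisibility pick $\tilde y$ with $p^{a}\tilde y = w$ where $p^a \ge$ ... — I expect the clean argument to be: reduce everything to multiplicities that are powers of $p$ by using that $\conv$ and the convexity inequality are generated by $p$-ary operations on a $p$-semidivisible monoid with unique divisors (this is essentially \cite[]{BG15}), then on powers of $p$ the division is immediate and uniqueness guarantees the $y$ and $z$ pieces glue to $x$.

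I expect the main obstacle to be exactly this bookkeeping: showing that on a $p$-semidivisible monoid with at most unique divisors, verifying the convexity inequality for \emph{arbitrary} multiplicity vectors reduces to verifying it for multiplicity vectors that are powers of $p$ (equivalently, that $\conv$ is generated by the binary-style $p$-fold operations), and that the divisor appearing in ``$mx = \sum m_i x_i$'' can be cancelled coherently across the $y$- and $z$-components. The uniqueness of divisors is what makes ``cancel $m$'' well defined: if $my = w$ has a solution it has only one, so the solutions for $w=\sum m_i y_i$ and $w'=\sum m_i z_i$ satisfy $m(y+z) = w+w' = mx$, forcing $y+z = x$ by uniqueness. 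Once that coherence is in hand, the rest is the routine infimum manipulation sketched above. Let me now carry this out.
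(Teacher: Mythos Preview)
Your strategy for the convex case is exactly the paper's: verify the convexity inequality only for totals $m=p^{k}$ (invoking \cite[Proposition~6]{BG15} for the reduction), use $p$-semidivisibility to choose $y,z$ with $p^{k}y=\sum_i m_i y_i$ and $p^{k}z=\sum_i m_i z_i$, and then use unique divisors to pass from $p^{k}x=p^{k}(y+z)$ to $x=y+z$. Your meandering about ``dividing by a general $m$'' is unnecessary once you commit to that reduction, but the content is right.

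There is, however, a genuine gap in your $\N$-sublinear argument. You assert that ``$k\cdot$ is a bijection so every decomposition of $kx$ comes from one of $x$ scaled.'' Injectivity holds by unique divisors, but surjectivity of $k\cdot$ requires $kX=X$, and $p$-semidivisibility only guarantees this for $k$ a power of $p$ (e.g.\ in $X=\Z[1/p]$ with $p$ odd, multiplication by $2$ is not surjective). So your direct computation of $(f\,\square\,g)(kx)=k\,(f\,\square\,g)(x)$ breaks for general $k$: there may be decompositions $kx=y+z$ with $y,z$ not in $kX$, and you have no control over $f(y)+g(z)$ for those. The paper avoids this by proving the homogeneity identity only for $k=p$ (where $p\cdot$ \emph{is} a bijection, by semidivisibility plus unique divisors) and then invoking \cite[Proposition~7]{BG15} to upgrade $p$-homogeneity to full $\N$-homogeneity. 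Alternatively, since you have already shown $f\,\square\,g$ is convex and subadditive with $(f\,\square\,g)(0)\le f(0)+g(0)=0$, you could get $k\,(f\,\square\,g)(x)\le (f\,\square\,g)(kx)$ directly from convexity applied to $k\cdot x = 1\cdot(kx)+(k-1)\cdot 0$, which together with subadditivity gives equality without any bijection claim.
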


\begin{proof}
Assume first that $f$ and $g$ are convex. Let$x, x_1,\dots,x_n\in X$ and $m_1,\dots,m_n\in \N$ such that $p^kx = \sum_{i=1}^nm_ix_i$ and $p^k=\sum_{i=1}^nm_i$ where $k\in\N$. Let $y_1,\dots,y_n,z_1\dots,z_n\in X$ be such that $x_i=y_i+z_i$, $1\le i \le n$. Since $X$ is $p$-semidivisible, there exist $y,z\in X$ such that $p^ky = \sum_{i=1}^nm_iy_i$ and $p^kz=\sum_{i=1}^nm_iz_i$. Thus, we have $p^kx=p^k(y+z)$. Since $X$ is uniquely divisible, we have $x=y+z$. Thus,
\begin{eqnarray}\label{ineq conv}
\nonumber p^kf\,\square\, g(x) & \le & p^kf(y)+p^kg(z)
\\ \nonumber & \stackrel{(*)}{\le} & \sum_{i=1}^nm_if(y_i)+\sum_{i=1}^nm_ig(z_i)
\\ & = & \sum_{i=1}^nm_i\big[f(y_i)+g(z_i)\big],
\end{eqnarray}
where in ($*$) we used the convexity of $f$ and $g$. Taking the infimum over the right side of~\eqref{ineq conv}, it follows that 
\begin{align*}
p^kf\,\square \, g(x) \le \sum_{i=1}^nm_if\,\square\, g(x_i).
\end{align*}
Now, use~\cite[Proposition 6]{BG15} to deduce that $f\,\square \, g$ is convex. To prove the $\N$-sublinear case, using Proposition~\ref{prop inf subadd} it is enough to prove that $f\,\square\, g$ is positively homogeneous. Assume that $px = y'+z'$. Then since $X$ is $p$-semidivisble, there exist $y,z\in X$ such that $py=y'$ and $pz=z'$. This means that $px=p(x+y)$. Since $X$ is uniquely divisible, it follows that $x=y+z$. Therefore, we have,
\begin{align*}
f\,\square\, g(px) & = \inf_{y'+z' = px}\big[f(y')+g(z')\big]
\\ & = \inf_{py+pz = px}\big[f(py)+g(pz)\big]
\\ & = p\inf_{py+pz = px}\big[f(y)+g(z)\big]
\\ & = p\inf_{y+z = x}\big[f(y)+g(z)\big].
\end{align*}
Now, using~\cite[Proposition 7]{BG15}, the result follows.
\end{proof}

We turn to the study of the gauge function.

\subsection{Rational dilation of sets and the Minkowski functional}

Given a set $A\subseteq X$ and $m\in \N$, let
\begin{align}\label{sum set}
\nonumber mA & = \left\{ x\in X~\left|~ x = \sum_{i=1}^mx_i, ~x_i\in A\right.\right\}
\\ & = \left\{ x\in X~\left|~ x  = \sum_{i=1}^nm_ix_i, ~x_i\in A, ~\sum_{i=1}^nm_i = m\right.\right\} .
\end{align}
In the case of convex sets, we can generalise~\eqref{sum set} in the following way.
\begin{definition}[Rational dilation of set]\label{def dilation}
Let $X$ be a monoid and $C\subseteq X$ be a convex set. Also, let $q\in \Q_+$ be a reduced fraction. Define
\begin{align*}
qC = \left\{ x\in C~\left|~ lx = \sum_{i=1}^nm_ix_i,~ x_i\in C,~ \sum_{i=1}^nm_i=m,~ \frac m l = q\right.\right\}.
\end{align*}
\end{definition}

\begin{remark}
Note that if $q=k\in \N$ then Definition~\ref{def dilation} coincides with~\eqref{sum set}. Also, note that if $C$ is not convex, we do not necessarily have $1\,C = C$. \qede
\end{remark}

We have the following proposition.

\begin{prop}[Dilations of convex sets are monotone]\label{prop monotone}
Assume that $X$ is a monoid and $C\subseteq X$ is convex and $0\in C$. Assume that $q_1,q_2\in \Q_+$ are reduced fractions with $q_1\le q_2$. Then $q_1C\subseteq q_2C$.
\end{prop}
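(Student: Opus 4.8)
The plan is to reduce the claim to a statement about how the dilation is represented and then use the monotonicity in the single integer parameter that measures "how much we inflate." Write $q_1 = m_1/\ell_1$ and $q_2 = m_2/\ell_2$ as reduced fractions. Given $x \in q_1 C$, by Definition~\ref{def dilation} there are $x_i \in C$ and $n_i \in \N$ with $\sum_i n_i = m_1$ and $\ell_1 x = \sum_i n_i x_i$. I want to produce a representation witnessing $x \in q_2 C$, i.e.\ elements of $C$, positive integer coefficients summing to some $m$, and a positive integer $\ell$ with $m/\ell = q_2$ and $\ell x = \sum (\text{coefficients}) \cdot (\text{elements})$. The natural move is to clear denominators: put everything over the common "scale" so that both $q_1$ and $q_2$ can be tested at the same level, and then use $0 \in C$ to pad the smaller combination up to the larger one.

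Concretely, I would first observe that it suffices to treat the case where $q_1$ and $q_2$ have the same denominator after scaling: from $\ell_1 x = \sum_i n_i x_i$ with $\sum_i n_i = m_1 = q_1 \ell_1$, multiply through by a suitable positive integer to get, say, $L x = \sum_i n_i' x_i$ with $\sum_i n_i' = q_1 L$ for a common $L$ that is a multiple of $\ell_2$ as well. Then $q_2 L$ is a positive integer $\geq q_1 L = \sum_i n_i'$, so I can write $q_2 L = \big(\sum_i n_i'\big) + r$ with $r = (q_2 - q_1)L \in \N \cup \{0\}$, and augment the combination by adding the term $r \cdot 0$, which is legitimate because $0 \in C$. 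This yields $L x = \sum_i n_i' x_i + r\cdot 0$ with coefficient sum $q_2 L$ and $q_2 L / L = q_2$, so $x \in q_2 C$; one also needs $x \in C$, but that already holds since $x \in q_1 C \subseteq C$ (membership in $C$ is part of the definition of $q_1 C$). A small bookkeeping point: if $r = 0$ the padding term is omitted, and if $q_2 L = 0$ — impossible here since $q_2 > 0$ and we may take $L \geq 1$ — nothing is needed. Finally, I should double-check that the representation $L x = \sum n_i' x_i$ obtained by scaling the original one is valid: multiplying the identity $\ell_1 x = \sum n_i x_i$ by $k = L/\ell_1 \in \N$ gives $L x = \sum (k n_i) x_i$, with $\sum k n_i = k m_1 = q_1 L$, which is exactly what is needed.

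The main obstacle I anticipate is purely in the definitional matching: Definition~\ref{def dilation} specifies $qC$ via the \emph{reduced} fraction $q = m/\ell$, but the witnessing identity only requires \emph{some} pair of positive integers with ratio $q$ — I should confirm that the definition is insensitive to which representative of the ratio is used (this is presumably immediate or was noted implicitly, since otherwise even $2C$ would be ambiguous, but it is worth stating). Once that is pinned down, the argument above is essentially a clearing-of-denominators plus one application of $0 \in C$, and there is no genuine difficulty; the hypothesis that $C$ is convex is what guarantees $qC$ is well-behaved (e.g.\ that these padded identities do not throw us outside the intended set), and the hypothesis $0 \in C$ is exactly what makes the padding step legal.
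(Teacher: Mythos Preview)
Your proposal is correct and follows essentially the same route as the paper: clear denominators by multiplying the witnessing identity $\ell_1 x = \sum_i n_i x_i$ by a suitable integer (the paper simply takes $L=\ell_1\ell_2$), then pad with the term $(q_2-q_1)L\cdot 0$ using $0\in C$ to raise the coefficient sum from $q_1 L$ to $q_2 L$. Your worry about whether Definition~\ref{def dilation} is insensitive to the representative of $q$ is well placed---the paper's own proof also silently lands on a non-reduced pair $(m'l,\,ll')$ with ratio $q_2$---but the definition quantifies over all $m,l$ with $m/l=q$, so any representative works.
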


\begin{proof}
Write $q_1 = \frac m l $ and $q_2 = \frac{m'}{l'}$. Since $q_1\le q_2$, we have that $ml' \le m'l$. Assume that $x\in q_1C$. Then we can write $lx = \sum_{i=1}^nm_ix_i$, $x_i\in C$, $\sum_{i=1}^nm_i = m$. Thus, it follows that $ll'x = \sum_{i=1}^nm_il'x_i = \sum_{i=1}^nm_il'x_i +(m'l-ml')\cdot 0$. Now, we have $\sum_{i=1}^nm_il' = ml'$, and so $\sum_{i=1}^nm_il'+(m'l-ml') = m'l$. Since $\frac{m'l}{ll'} = \frac{m'}{l'}$, we have $x\in \frac{m'}{l'}C$, which completes the proof.
\end{proof}

The first step in our proof of the Hahn-Banach separation theorem requires us  to construct a group version of the Minkowski functional. For this we need the following result, which is an immediate consequence of Proposition~\ref{prop monotone}.

\begin{cor}\label{cor interval}
Assume that $X$ is a monoid, $C\subseteq X$ is convex and $0\in C$, and let $x\in C$. Then the set $\{q\in \Q_+~|~ x\in qC\}$ is of the form $[\lambda,\infty)\cap\Q_+$ or $(\lambda,\infty)\cap\Q_+$, where $\lambda \in \R_+$.
\end{cor}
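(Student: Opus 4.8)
The plan is to use Proposition~\ref{prop monotone} to show that the set $S=\{q\in\Q_+\mid x\in qC\}$ is \emph{upward closed} in $\Q_+$, and then to observe that any upward-closed subset of $\Q_+$ is precisely the intersection with $\Q_+$ of an interval of the form $[\lambda,\infty)$ or $(\lambda,\infty)$ for some $\lambda\in[0,\infty]$; since $x\in C=1\,C$ (as $C$ is convex, by the remark following Definition~\ref{def dilation}), we have $1\in S$, so $\lambda$ is finite, i.e.\ $\lambda\in\R_+$.

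First I would fix $x\in C$ and note that $1\in S$, so $S$ is nonempty. Next, take any $q_1\in S$ and any reduced fraction $q_2\in\Q_+$ with $q_2\ge q_1$; since $0\in C$, Proposition~\ref{prop monotone} gives $q_1C\subseteq q_2C$, hence $x\in q_2C$, i.e.\ $q_2\in S$. This establishes that $S$ is upward closed. Then I would set $\lambda=\inf S$ (the infimum taken in $\R$, which exists and is $\ge 0$ since $S\subseteq\Q_+$ is nonempty and bounded below by $0$); because $1\in S$ we get $\lambda\le 1<\infty$, so $\lambda\in\R_+$. Finally, upward closedness forces $(\lambda,\infty)\cap\Q_+\subseteq S\subseteq[\lambda,\infty)\cap\Q_+$, and depending on whether the infimum $\lambda$ is itself attained (equivalently, whether $\lambda\in S$, which can only happen when $\lambda\in\Q_+$) we get $S=[\lambda,\infty)\cap\Q_+$ or $S=(\lambda,\infty)\cap\Q_+$.

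There is essentially no hard obstacle here: the statement is an immediate structural consequence of monotonicity. The only point requiring a little care is the dichotomy at the endpoint — one must check that $S$ cannot be a "gap-free" set that is neither open nor closed at $\lambda$, which is automatic since $\Q_+$ carries the order topology and $S$ is a nonempty up-set, so its only possible shapes are $[\lambda,\infty)\cap\Q_+$ and $(\lambda,\infty)\cap\Q_+$. I would also remark in passing that $x\in qC$ makes sense only when $x\in C$ (Definition~\ref{def dilation} builds this in), which is why the hypothesis $x\in C$ is present and why $1\in S$.
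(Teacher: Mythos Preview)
Your proposal is correct and follows precisely the route the paper intends: the paper simply asserts that the corollary is ``an immediate consequence of Proposition~\ref{prop monotone}'', and your argument---upward closedness of $S$ via Proposition~\ref{prop monotone}, nonemptiness from $1\in S$, and the resulting interval dichotomy via $\lambda=\inf S$---is exactly the natural unpacking of that claim. There is nothing to add or correct.
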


Using Corollary~\ref{cor interval}, it is natural to define the following.
\begin{definition}[Minkowski functional for groups]
Let $X$ be a monoid and $C\subseteq X$. Define
\begin{align}\label{def mink}
\rho_C(x) = \inf\big\{q\in \Q_+~\big| ~ x\in qC\big\}.
\end{align}
If there is no $q\in \Q_+$ that satisfies~\eqref{def mink}, define $\rho_C(x) = \infty$.
\end{definition}

\begin{remark}
Note that Proposition~\ref{prop monotone}, and consequently Corollary~\ref{cor interval} and Definition~\ref{def mink}, are purely algebraic, and do not require any topological structure. \qede
\end{remark}

\begin{remark}\label{rmk threshold}
By Definition~\ref{def mink}, if $x\in C$ then $\rho_C(x) \le 1$, and if $x\notin C$, then $\rho_C(x) \ge 1$. \qede
\end{remark}

\begin{prop}[Sublinearity of the Minkowski functional]\label{prop mink}
Assume that $X$ is a monoid, and $C\subseteq X$ is convex. Then the functional defined by~\eqref{def mink} is $\N$-sublinear.
\end{prop}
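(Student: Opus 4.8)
The plan is to verify the two defining properties of $\N$-sublinearity separately: positive homogeneity $\rho_C(kx) = k\rho_C(x)$ for $k\in\N\cup\{0\}$, and subadditivity $\rho_C(x+y)\le\rho_C(x)+\rho_C(y)$. Throughout I would work directly from the definition $\rho_C(x) = \inf\{q\in\Q_+ \mid x\in qC\}$ together with the rational-dilation Definition~\ref{def dilation}, and I would freely use the monotonicity Proposition~\ref{prop monotone} and its Corollary~\ref{cor interval}, which guarantee that the set $\{q\in\Q_+\mid x\in qC\}$ is an up-set of the form $[\lambda,\infty)\cap\Q_+$ or $(\lambda,\infty)\cap\Q_+$; this is what lets me pass freely between ``$x\in qC$'' and ``$\rho_C(x)\le q$'' (up to the usual $\eps$ of slack).

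For positive homogeneity, the case $k=0$ forces $0x=0$, and $0\in qC$ for every $q\in\Q_+$ essentially by taking the trivial representation, so $\rho_C(0)=0=0\cdot\rho_C(x)$; I would note the one subtlety that this uses $0\in C$, which is part of the standing setup for $\rho_C$ to behave well — if $0\notin C$ the statement still holds vacuously or trivially, but I would state the hypothesis cleanly. For $k\ge 1$: if $x\in qC$ with $q=m/l$ reduced, then from $lx=\sum m_i x_i$ we get $l(kx)=\sum (km_i)x_i$ with $\sum km_i = km$, so $kx\in (km/l)C$; hence $\rho_C(kx)\le k\rho_C(x)$. Conversely, if $kx\in qC$, I want to extract $x\in (q/k)C$; writing the representation for $l(kx)$ and dividing through by $k$ — which is legitimate on the \emph{coefficients} $m_i$ and the integers $m,l$, not on group elements — should give $x\in(q/k)C$ after reducing the fraction, whence $k\rho_C(x)\le\rho_C(kx)$. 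I expect this converse direction to require a little care about when $k\mid m_i$ can be arranged; the clean way is to use $l(kx) = \sum m_i x_i$ directly as a witness that $kx \in (m/l)C$ and then observe $(kl)x = \sum m_i x_i$ exhibits $x \in (m/(kl))C = ((m/l)/k)C$ after reduction, so no divisibility of coefficients is needed at all.

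For subadditivity, suppose $\rho_C(x)<q_1$ and $\rho_C(y)<q_2$ with $q_1=m_1/l_1$, $q_2=m_2/l_2$ reduced rationals (the case where either value is $\infty$ is trivial). Then $x\in q_1C$ and $y\in q_2C$, so $l_1 x = \sum a_i u_i$ with $u_i\in C$, $\sum a_i = m_1$, and $l_2 y = \sum b_j v_j$ with $v_j\in C$, $\sum b_j = m_2$. Multiplying the first by $l_2$ and the second by $l_1$ and adding gives $l_1l_2(x+y) = \sum (l_2 a_i)u_i + \sum(l_1 b_j)v_j$, a single representation over elements of $C$ with total coefficient $l_2 m_1 + l_1 m_2$. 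Hence $x+y \in \bigl((l_2 m_1 + l_1 m_2)/(l_1 l_2)\bigr)C = (q_1+q_2)C$ (again reducing the fraction, and invoking Proposition~\ref{prop monotone} to absorb the reduction harmlessly). Therefore $\rho_C(x+y)\le q_1+q_2$; letting $q_1\downarrow\rho_C(x)$ and $q_2\downarrow\rho_C(y)$ along rationals finishes it.

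The main obstacle, such as it is, is purely bookkeeping with reduced fractions: Definition~\ref{def dilation} insists $q$ be written in lowest terms, so at each step where I produce a representation ``$lx = \sum m_i x_i$ with $\sum m_i = m$ and $m/l = q$'' for a \emph{non-reduced} ratio $m/l$, I must argue that $x$ nonetheless lies in $q'C$ for the reduced form $q'$ of that ratio. This is exactly handled by Proposition~\ref{prop monotone} (monotonicity of dilations) together with the observation that for a reduced $q'=m'/l'$ one has $m'd \cdot x = \sum m_i d\, x_i$ for the common multiplier $d$, so the non-reduced witness upgrades to a reduced one — or, more simply, one just notes $qC$ as an expression depends only on the value $q$ once monotonicity is in hand. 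I would fold this remark in once at the start rather than repeating it. Everything else is a direct unwinding of definitions and needs no topology, consistent with the earlier remark that $\rho_C$ is a purely algebraic object.
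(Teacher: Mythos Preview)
Your argument is correct and follows essentially the same route as the paper: subadditivity is obtained by clearing denominators and concatenating the two representations to get $l_1l_2(x+y)=\sum (l_2 a_i)u_i+\sum(l_1 b_j)v_j$, and the reverse homogeneity inequality comes from reading a witness $l(kx)=\sum m_i x_i$ as $(kl)x=\sum m_i x_i$, exactly as in the paper's chain of infima. The only cosmetic differences are that the paper proves subadditivity first and then uses it (rather than a direct construction) for the easy direction $\rho_C(kx)\le k\rho_C(x)$, and that you are more explicit about the $k=0$ case and the reduced-fraction bookkeeping, which the paper silently absorbs.
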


\begin{proof}
We start by showing that $\rho_C$ is subadditive. Indeed, let $x,y\in X$, and choose $m,m',l,l'\in \N$ such that $\frac{m}{l}<\rho_C(x)+\eps$, $\frac{m'}{l'} \le \rho_C(y)+\eps$, and $lx =\sum_{i=1}^nm_ic_i$, $c_i\in C$, $\sum_{i=1}^nm_i = m$ and $l'y=\sum_{i=1}^{n'}m_i'c_i'$, $c_i'\in C$, $\sum_{i=1}^{n'}m_i'=m'$. Thus, we have $ll'(x+y) = \sum_{i=1}^nl'm_ic_i+\sum_{i=1}^{n'}lm_i'c_i'$, and by~\eqref{def mink} we have
\begin{align*}
\rho_C(x+y) \le \frac 1 {ll'}\left(\sum_{i=1}^nl'm_i+\sum_{i=1}^{n'}lm_i'\right) = \frac{l'm+lm'}{ll'} = \frac m l + \frac{m'}{l'} \le \rho_C(x)+\rho_C(y)+2\eps.
\end{align*}
Since $\eps>0$ is arbitrary, the subadditivity of $\rho_C$ follows. To show the positive homogeneity, note that since $\rho_C$ is subadditive, we have $\rho_C(kx) \le k\rho_C(x)$ for all $x\in X$, $k\in \N$. Thus, all we need to prove is $\rho_C(kx) \ge k\rho_C(x)$. Indeed,
\begin{eqnarray*}
\rho_C(kx) & \stackrel{\eqref{def mink}}{=} & \inf\left\{\left.\frac m l~\right|~lkx =\sum_{i=1}^nm_ic_i,~~ \sum_{i=1}^nm_i=m,~~ c_i\in C\right\}
\\ & = & k\inf\left\{\left.\frac m {lk}~\right|~lkx =\sum_{i=1}^nm_ic_i,~~ \sum_{i=1}^nm_i=m,~~ c_i\in C\right\}
\\ & \stackrel{(*)}{\ge} & k\inf\left\{\left.\frac m {l}~\right|~lx =\sum_{i=1}^nm_ic_i,~~ \sum_{i=1}^nm_i=m,~~ c_i\in C\right\}
\\ & = & k\rho_C(x),
\end{eqnarray*}
where in ($*$) we used the fact that we  take an infimum over a larger set. Altogether, we have  that $\rho_C(kx)=k\rho_C(x)$, and along with the subadditivity of $\rho_C$, this completes the proof.
\end{proof}

\subsection{Hahn-Banach separation theorem}

Under no additional topological assumption, we can obtain the following group version of the Hahn-Banach separation theorem.

\begin{thm}[Hahn-Banach weak separation]\label{HB groups}
Assume that $X$ is a semidivisible topological group, and $C, D\subseteq X$ convex. Assume that $\int\, C \neq \emptyset$ and $D \cap \int\, C \neq \emptyset$. Then there exists $\varphi:X\to [-\infty,\infty]$ which is nonzero and affine such that
\[\sup_{c\in C}\varphi(c) \le \inf_{d\in D}\varphi(d).\]
\end{thm}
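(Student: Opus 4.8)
\medskip\noindent\textit{Proof proposal.} The plan is to run the classical geometric Hahn--Banach argument, with the group Minkowski functional from the previous subsection playing the role of the gauge and the maximum formula of~\cite[Theorem~3]{BG15} playing the role of the analytic Hahn--Banach extension theorem.

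First I would reduce to separating a point from an open convex set. Since translation by a fixed element of $X$ is an isomorphism of both the topology and the convexity structure, we may assume $0\in\int C$. Fix $c_0\in\int C$ and $d_0\in D$ and set $w:=d_0-c_0$. Consider
\[K:=\int C - D + w=\bigcup_{d\in D}\big(\int C - d + w\big).\]
Then $K$ is open (a union of translates of the open set $\int C$), $0\in K$ (the summand coming from $c_0,d_0$), and $w\notin K$ (since $w=c-d+w$ with $c\in\int C$, $d\in D$ would give $c=d$, contradicting $D\cap\int C=\emptyset$). The point requiring genuine work is that $K$ is convex; this rests on two facts that are immediate in a topological vector space but not here: that $\int C$ is convex, and that a translated difference of convex sets is convex. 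I would prove these using $p$-semidivisibility: division by $p^k$, together with continuity of $x\mapsto p^kx$, substitutes for the usual convex-combination manipulations — in particular for the ``radial'' statement that a point lying, in the semidivisible sense, strictly between $\int C$ and $C\subseteq\overline C$ again lies in $\int C$. (Alternatively, one may simply replace $K$ by $\conv(\int C - D + w)$, which is convex by construction and, by the same computation, still open and still omits $w$.)

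Next, Proposition~\ref{prop mink} gives that $\rho_K$ is $\N$-sublinear, and Remark~\ref{rmk threshold} gives $\rho_K(w)\ge 1$ since $w\notin K$. Applying the maximum formula of~\cite{BG15} to the convex function $\rho_K$ at $w$ produces an additive $\varphi:X\to[-\infty,\infty]$ (hence affine) with $\varphi\le\rho_K$ on $X$ and $\varphi(w)=\rho_K(w)$; in particular $\varphi$ is nonzero because $\varphi(w)\ge 1$. (If $\rho_K(w)=\infty$ one instead picks, via the same formula, an additive minorant with $\varphi(w)$ any finite value $\ge 1$.) Then for $c\in\int C$ and $d\in D$ we have $c-d+w\in K$, so $\rho_K(c-d+w)\le 1$ by Remark~\ref{rmk threshold}, whence
\[\varphi(c)-\varphi(d)+\varphi(w)=\varphi(c-d+w)\le\rho_K(c-d+w)\le 1\le\varphi(w);\]
cancelling the finite value $\varphi(w)$ yields $\varphi(c)\le\varphi(d)$, i.e.\ $\sup_{\int C}\varphi\le\inf_D\varphi$. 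Finally I would upgrade $\sup_{\int C}\varphi$ to $\sup_C\varphi$: since $\varphi\le\rho_K\le 1$ on the open set $K$, $\varphi$ is additive and bounded above on a neighbourhood of a point, hence (translating) on a neighbourhood $V$ of $0$; a standard argument then shows $\varphi$ is continuous, using that each dilation $x\mapsto p^kx$ is continuous so that its preimage of $V$ is again a neighbourhood of $0$ on which $\varphi$ is correspondingly small. Combined with the radial lemma above, which shows $\int C$ is dense in $C$, this gives $\sup_C\varphi=\sup_{\int C}\varphi\le\inf_D\varphi$.

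The main obstacle is precisely the geometric input in the second paragraph: establishing that interiors of convex sets, and translated differences of convex sets, are again convex, and that the interior is dense in the set, \emph{without} scalar multiplication. This is where $p$-semidivisibility is essential, and it must be combined with continuity of the dilation maps and a limiting argument. A secondary bookkeeping issue is the value $+\infty$: one should ensure (or arrange, e.g.\ by passing to the clopen subgroup generated by a symmetric neighbourhood of $0$ contained in $K$, on which $\rho_K$ is finite) that $\varphi(w)$ is finite so that the cancellation above is legitimate.
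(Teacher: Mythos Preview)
Your approach is genuinely different from the paper's, and considerably more circuitous. The paper does \emph{not} reduce to separating a point from a set. It works directly with $\rho_C$ (after translating so $0\in\int C$): setting $f=\rho_C$ and $g=\iota_{D}-1$, one has $-g\le f$ with both convex, and the sandwich theorem \cite[Theorem~2]{BG15} yields a nonzero affine $\varphi$ with $-g\le\varphi\le f$. Then $\varphi(c)\le\rho_C(c)\le 1$ for every $c\in C$ and $\varphi(d)\ge 1$ for every $d\in D$, and the proof is finished in three lines. In particular there is no need to form $C-D$, no need to prove $\int C$ is dense in $C$, no continuity argument, and no worry about infinite values.

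Your main line has a real gap at exactly the point you flag: convexity of $K=\int C-D+w$. As Remark~\ref{rmk sum} records, sums (hence differences) of convex sets in a group need not be convex; the positive result there uses full divisibility together with unique divisors. Mere $p$-semidivisibility gives you $c,d$ with $p^kc=\sum m_ic_i$ and $p^kd=\sum m_id_i$, but from $p^kx=p^k(c-d)$ you cannot conclude $x=c-d$ without torsion-freeness, which the theorem does not assume. Your alternative of replacing $K$ by $\conv(K)$ is salvageable --- $\conv(K)$ is open (it is a union of sets of the form $\{x:mx\in m_1U_1+\cdots\}$, preimages of open sets under $x\mapsto mx$), and one can show $w\notin\conv(K)$ using only convexity of $\int C$ and $D$ together with $p$-semidivisibility --- but you would then still owe the continuity and density arguments in your last paragraph to pass from $\sup_{\int C}\varphi$ to $\sup_C\varphi$. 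The paper's sandwich argument sidesteps every one of these issues, and you should use it instead: apply \cite[Theorem~2]{BG15} rather than \cite[Theorem~3]{BG15}, with $\rho_C$ above and $1-\iota_D$ below.
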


\begin{proof}
Assume without loss of generality that $0\in \int\, C$. Let $f = \rho_C$ and $g = \iota_{\overline {D}}-1$.  $f,g:X\to (-\infty,\infty]$ are convex and $-g \le f$. Then by~\cite[Theorem 2]{BG15}, there exists a nonzero affine $\varphi:X\to \R$ such that $-g\le \varphi \le f$. Now, for every $c\in C$, $\varphi(c) \le f(c) \le 1$ and for every $d\in D$, $\varphi(d) \ge -g(d) =1$, which completes the proof.
\end{proof}

Many applications of the Hahn-Banach separation theorem, require \emph{strict} separation:  if $C$ is convex and $x\notin C$ then there exists $\varphi$ linear (additive, in our case), such that $\sup_{c\in C}\varphi(c)<\varphi(x)$. In order to prove such a result in groups, we need more topological structure. Under additional topological assumptions we draw a stronger conclusion about $\rho_C$. In particular, we have the following proposition.

\begin{prop}\label{prop rho strict}
Assume that $X$ is a topological group, $C\subseteq X$ is convex, and $0\in \int \,C$. Then $\rho_C$ is everywhere continuous on its domain. If, in addition, $X$ is  connected, then $\rho_C$ is everywhere finite.
\end{prop}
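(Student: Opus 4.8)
The plan is to exploit the translation structure of the topological group together with the $\N$-sublinearity of $\rho_C$ established in Proposition~\ref{prop mink}. Since $0 \in \int\, C$, there is an open neighbourhood $U$ of $0$ with $U \subseteq C$, hence $\rho_C \le 1$ on $U$ by Remark~\ref{rmk threshold}. The key quantitative step is to upgrade this to a bound of the form $\rho_C(x) \le \eps$ on a suitable neighbourhood of $0$: given a rational $\eps = m/l$, one wants a neighbourhood $U_\eps$ of $0$ with $U_\eps \subseteq \frac{m}{l} C$. I would obtain this from the continuity of the scaling-type maps in the group — more precisely, from the fact that $x \mapsto lx$ is continuous, so that $(lx \in mU) \Rightarrow (x \in \frac{m}{l}C)$ once $mU \subseteq mC$; taking $U_\eps$ to be an open neighbourhood of $0$ with $lU_\eps$ contained in a neighbourhood witnessing membership in $mC$ does the job. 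Concretely, if $V$ is open with $0\in V\subseteq C$, then $mV$ is open (translates of open sets are open), $0 \in mV \subseteq mC$, and by continuity of $x\mapsto lx$ there is an open $U_\eps \ni 0$ with $lU_\eps \subseteq mV$; then $x \in U_\eps \Rightarrow lx = \sum$ of $m$ elements of $C$ $\Rightarrow \rho_C(x) \le m/l = \eps$.

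Granting the local bound $\rho_C \le \eps$ on $U_\eps$, continuity follows from subadditivity in the usual sublinear-functional way, but one must be careful because $X$ is only a group, not a vector space, so I cannot write $\rho_C(x) - \rho_C(x_0)$ freely. Instead I argue: for $x, x_0$ in the domain, subadditivity gives $\rho_C(x) \le \rho_C(x_0) + \rho_C(x - x_0)$ whenever $x - x_0$ is also in the domain, and symmetrically $\rho_C(x_0) \le \rho_C(x) + \rho_C(x_0 - x)$. Since $X$ is a group we have the honest element $x - x_0$, and as $x \to x_0$ we have $x - x_0 \to 0$, so $x - x_0 \in U_\eps$ eventually and likewise $x_0 - x \to 0$; hence $\rho_C(x_0) - \eps \le \rho_C(x) \le \rho_C(x_0) + \eps$ for $x$ near $x_0$, giving continuity at $x_0$ (one should also note $\rho_C(0) = 0$, so this covers continuity at $0$, and the two-sided bound keeps us inside the finite part of the domain near any finite point). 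This handles the first assertion.

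For the second assertion, suppose $X$ is connected. The domain of $\rho_C$, namely $\{x : \rho_C(x) < \infty\}$, is nonempty (it contains $C$, hence a neighbourhood of $0$). I claim it is both open and closed. It is open: if $\rho_C(x_0) < \infty$, then for $x \in x_0 + U_1$ (where $U_1$ is a neighbourhood of $0$ on which $\rho_C \le 1$) we get $\rho_C(x) \le \rho_C(x_0) + 1 < \infty$. It is closed: if $x_0$ is a limit point of the domain, pick $x$ in the domain with $x - x_0 \in U_1$, i.e. $x_0 \in x + U_1' $ where $U_1' = -U_1$ is again a neighbourhood of $0$ with $\rho_C \le 1$ on it (using $\rho_C(-u) \le 1$ — here one should check that $U_1$ can be taken symmetric, which is standard in topological groups, or simply replace $U_1$ by $U_1 \cap (-U_1)$); then $\rho_C(x_0) \le \rho_C(x) + \rho_C(x_0 - x) \le \rho_C(x) + 1 < \infty$. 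Since the domain is nonempty, open, and closed in the connected space $X$, it is all of $X$, so $\rho_C$ is everywhere finite.

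The main obstacle I anticipate is the local bound $\rho_C \le \eps$ near $0$: one must verify carefully that the dilation $\frac{m}{l}C$ from Definition~\ref{def dilation} really does contain the preimage under $x \mapsto lx$ of an open neighbourhood of $0$ sitting inside $mC$, which rests on $0 \in C$ (so that "extra mass" can be padded with zeros, as in Proposition~\ref{prop monotone}) and on the continuity of multiplication-by-$l$ in the group. The rest is a routine adaptation of the locally convex space argument, with the one caveat that all manipulations must be phrased using genuine group elements (differences $x - x_0$) rather than formal scalar combinations.
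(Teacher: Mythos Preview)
Your proposal is correct and follows essentially the same route as the paper: show $\rho_C$ is small on a neighbourhood of $0$ via preimages under $x\mapsto lx$, then transfer continuity to arbitrary points by the subadditivity inequalities $\rho_C(x)-\rho_C(x_0)\le\rho_C(x-x_0)$ and its symmetric counterpart, and finally use an open--closed argument for finiteness in the connected case. The only notable difference is that the paper dispenses with your intermediate step through $mV$ and general $\tfrac{m}{l}C$: since $\rho_C$ is already known to be positively homogeneous (Proposition~\ref{prop mink}), it suffices to observe that $\phi_l^{-1}(\int\,C)$ is an open neighbourhood of $0$ on which $\rho_C\le \tfrac{1}{l}$ directly, which is a slight streamlining of your argument.
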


\begin{proof}
Let $U$ be a neighbourhood of $0$. Then $V = \int \, C\cap U$ is also a neighbourhood of $0$. Now, $\frac 1 l V \subseteq \frac 1 l C$. Note that if we define $\phi_l(x) = lx$, then $\phi_l$ is continuous and $\frac 1 l V = \phi_l^{-1}(V)$, which implies that $\frac 1 l V$ is open. Also, since $0\in V$, we have that $0\in \frac 1 l V$. Thus, $V$ is again an open neighbourhood of $0$. Assume that $x\in \frac 1 l C$, then $lx\in C$ and then by the positive homogeneity of $\rho_C$, $l\rho_C(x) = \rho_C(lx) \le 1$. Thus $\rho_C(x) \le \frac 1 l$. This means that $\rho_C$ is continuous at $0$. Now, if $x_0\in X$, since we have $\rho_C(x)-\rho_C(x_0) \le \rho_C(x-x_0)$, and $\rho_C(x_0)-\rho_C(x) \le \rho_C(x_0-x)$, continuity at $x=0$ implies continuity everywhere else. If $X$ is connected, then $\rho_C^{-1}(\R)$ is both open and closed, and since it is not empty, it must be all of $X$ (see also Prop. \ref{thm cover}). This concludes the proof.
\end{proof}

The following is an easy but useful proposition.

\begin{prop}\label{prop subadd}
Let $X$ be a monoid. If $a:X\to [-\infty,\infty]$ is affine and everywhere finite, then we can write $a(x)= \alpha+\phi(x)$, where $\alpha\in \R$ and $\phi:X\to \R$ is additive. If $\alpha\ge 0$, then $a$ is also subadditive. 
\end{prop}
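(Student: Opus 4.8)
The plan is to exhibit the decomposition explicitly: set $\alpha := a(0)$ and $\phi(x) := a(x) - \alpha$. Since $a$ is everywhere finite, $\phi$ maps $X$ into $\R$, and $\phi(0) = 0$; the entire content of the first assertion is that $\phi$ is additive. Recall that ``$a$ affine'' unwinds to the statement: whenever $m = \sum_{i=1}^n m_i$ and $mx = \sum_{i=1}^n m_i x_i$, one has $m\,a(x) = \sum_{i=1}^n m_i a(x_i)$ (combining the convexity inequality with the concavity inequality for $-a$; finiteness of $a$ means no $\infty-\infty$ ambiguity ever arises). I will need only two instances of this, both with $n = 2$ and $m_1 = m_2 = 1$.

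The first instance uses the trivial relation $2x = 1\cdot(2x) + 1\cdot 0$, valid in any monoid with $0$ playing the role of a slack term, applied at the point $x$: it yields $2\,a(x) = a(2x) + a(0)$, i.e.\ $a(2x) = 2a(x) - \alpha$ for every $x \in X$. The second instance uses commutativity of the monoid to write $2(x+y) = (x+y)+(x+y) = 2x + 2y = 1\cdot(2x) + 1\cdot(2y)$, applied at the point $x+y$: it yields $2\,a(x+y) = a(2x) + a(2y)$. Substituting the formula from the first instance for both $a(2x)$ and $a(2y)$ and dividing by $2$ gives
\[ a(x+y) = a(x) + a(y) - \alpha, \]
which is precisely $\phi(x+y) = \phi(x)+\phi(y)$. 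Hence $\phi$ is additive, proving the first claim; and if $\alpha \ge 0$, the displayed identity immediately gives $a(x+y) \le a(x) + a(y)$, so $a$ is subadditive.

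The calculations are routine, so the only ``obstacle'' is a matter of bookkeeping: one must resist the temptation to split $x$ (or $x+y$) ``into halves'', since a general monoid need not be divisible, and instead extract everything from the two relations $2x = (2x) + 0$ and $2(x+y) = (2x) + (2y)$, both of which hold in every commutative monoid. In particular, no divisibility, no ``unique divisors'' hypothesis, and no topological assumption on $X$ is used anywhere in the argument.
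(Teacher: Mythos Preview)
Your proof is correct and follows essentially the same approach as the paper's: both set $\phi(x)=a(x)-a(0)$ and extract additivity from the affine identity applied to the relation $n\sum_i x_i=\sum_i(nx_i)$. The paper works with general $n$ and isolates positive homogeneity as an intermediate step (via $x_2=\cdots=x_n=0$) before concluding additivity, whereas you use only the $n=2$ instances $2x=(2x)+0$ and $2(x+y)=(2x)+(2y)$ and reach $a(x+y)=a(x)+a(y)-\alpha$ directly; this is a mild streamlining but not a different idea.
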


\begin{proof}
If $a$ is affine, then it is both convex and concave. Then $\phi(x) = a(x)-a(0)$ is convex, concave, and $\phi(0)=0$. Let $x_1,\dots,x_n\in X$. Then $n\sum_{i=1}^nx_i = \sum_{i=1}^n 1\cdot (nx_i)$. Thus, since $\phi$ is both convex and concave, we have
\begin{align}\label{equality sum}
n\phi\left(\sum_{i=1}^nx_i\right) = \sum_{i=1}^n\phi(nx_i).
\end{align}
Letting $x_2=\dots=x_n=0$, it follows that $\phi$ is positively homogeneous. Thus,~\eqref{equality sum} gives
\begin{align*}
n\phi\left(\sum_{i=1}^nx_i\right) = \sum_{i=1}^nn\phi(x_i).
\end{align*}
which implies that $\phi$ is additive. Choosing $\alpha=a(0)$, the first assertion follows. To prove the second assertion, note that if $\alpha\ge 0$, we have 
\begin{align*}
a(x+y) = \alpha+\phi(x+y) = \alpha+\phi(x)+\phi(y) \le 2\alpha+\phi(x)+\phi(y) = a(x)+a(y),
\end{align*}
which concludes the proof.
\end{proof}

Another useful auxiliary result is the following early subadditive separation theorem due to Kaufman.

\begin{thm}[Kaufman,~\cite{Kau66}]\label{thm kaufman}
Assume that $X$ is a monoid and $f,g:X\to [-\infty,\infty)$ are subadditive, and $-g\le f$. Then there exists a finite additive map $a$ such that $-g\le a \le f$.
\end{thm}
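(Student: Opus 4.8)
\textbf{Proof proposal for Theorem~\ref{thm kaufman}.}

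The plan is to reduce the statement to the classical Hahn--Banach theorem in the vector space $\R$ applied to $\Z$-valued linear functionals, via a standard ``sandwich'' construction. First I would invoke the inf-convolution: set $h = f \,\square\, g$. By Proposition~\ref{prop inf subadd} (Moreau), $h$ is subadditive. The hypothesis $-g \le f$ means $f(x) + g(y) \ge f(x) - f(-y) \ge \dots$ — more directly, $-g \le f$ gives $f(y) + g(z) \ge -g(z) + g(z) = 0$ whenever... actually the cleaner route: $-g \le f$ is equivalent to $0 \le f(y) + g(z)$ for all $y,z$ with... no, one must be careful, since $-g(x) \le f(x)$ for each \emph{single} $x$. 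The point is that $h(0) = \inf_{y+z=0}[f(y)+g(z)]$, and we want to show $h(0) > -\infty$ and, more importantly, produce an additive $a$ with $-g \le a \le f$.

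The key step is to build the additive functional directly. I would consider the set $P = \{(x,t) \in X \times \Q : t \ge f(x)\}$ (an ``epigraph over the rationals'') and the set $N = \{(x,t) \in X \times \Q : t \le -g(x)\}$; subadditivity of $f$ makes $P$ closed under addition in the monoid $X \times \Q$ (with $\Q$ under addition), and similarly for $N$, while $-g \le f$ guarantees $P$ and $N$ are ``separated'' in the sense that $(x,s) \in N$, $(x,t) \in P$ forces $s \le t$. Now one wants an additive $a : X \to \R$ whose graph separates $N$ from $P$. Since $f,g$ take values in $[-\infty,\infty)$ and are subadditive, the Minkowski-type functional or a direct Zorn's lemma argument on partial additive extensions applies: extend $a$ one generator at a time, at each stage choosing the value $a(x)$ in the nonempty interval $\big[\sup_{n,y}\tfrac{1}{n}(a(y) - f(y - ?)) , \dots\big]$ dictated by subadditivity — this is exactly the mechanism of the classical Hahn--Banach proof, where subadditivity of $f$ plays the role of sublinearity and the ``homogenization'' is over $\N$ rather than $\R_{>0}$. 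Because everything is $\N$-homogenized through subadditivity ($f(nx) \le nf(x)$), the candidate value set at each step is a nonempty real interval, and Zorn's lemma yields a maximal, hence total, additive $a$ with $-g \le a \le f$; finiteness of $a$ follows since $a(x) \le f(x) < \infty$ and $a(-x) \le f(-x) < \infty$ force $a(x) > -\infty$.

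The main obstacle I anticipate is the one-step extension: verifying that when we adjoin a new element $x_0$ to the subgroup on which $a$ is already defined, the two-sided bound
\[
\sup\left\{ \tfrac{1}{n}\big(a(w) - f(w - n x_0)\big) \;:\; n \in \N,\ w \in \spn_{\Z}(\text{domain so far})\right\}
\;\le\;
\inf\left\{ \tfrac{1}{m}\big(f(v + m x_0) - a(v)\big) \;:\; m \in \N,\ v \in \dots \right\}
\]
actually holds, i.e., that every ``lower'' candidate is $\le$ every ``upper'' candidate. This is where subadditivity and the hypothesis $-g \le f$ (propagated through $f \,\square\, g$, or used to seed $a$ with the zero functional on the trivial subgroup) must be combined: given $n, w$ and $m, v$, one writes $m a(w) + n a(v) = a(mw + nv)$ and must bound this using $f(mw + nv) \le f((mw - mnx_0) + (nv + nmx_0)) \le f(m(w - nx_0)) + f(n(v+mx_0)) \le m f(w - nx_0) + n f(v + mx_0)$, which rearranges to exactly the required inequality. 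The subtlety is purely bookkeeping with the integer coefficients, and the fact that $X$ need not be torsion-free or divisible does not interfere because we only ever multiply by positive integers and never divide. Once this lemma is in place, Zorn's lemma and the finiteness remark finish the proof.
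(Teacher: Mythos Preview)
The paper does not prove this theorem; it is quoted from Kaufman~\cite{Kau66} as an external auxiliary result and then used as a black box in the proof of Theorem~\ref{HB groups strict}. There is therefore no ``paper's own proof'' to compare your attempt against.

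That said, your sketch has a genuine gap relative to the \emph{statement} as given: the hypothesis is that $X$ is a monoid, yet your one-step extension repeatedly uses subtraction --- expressions such as $w - n x_0$, $f(w-nx_0)$, $f(-y)$, and $a(-x)$ --- which need not make sense when $X$ has no inverses. The Hahn--Banach style extension you describe is the right mechanism in a \emph{group}, and your verification of the key inequality in that setting is essentially correct; but in a bare commutative monoid the lower and upper candidate sets for $a(x_0)$ must be phrased without differences (e.g.\ via relations $u + n x_0 = w$ with $u$ in the current domain, rather than $w - n x_0$), and the nonemptiness argument changes accordingly. A second, smaller gap: your inductive step only enforces $a \le f$; you gesture at $-g \le a$ (``seed $a$ with the zero functional''), but $g$ never appears in the extension inequality you check, so nothing written prevents the extended $a$ from falling below $-g$. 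In Kaufman's argument both bounds are carried through the Zorn step simultaneously. Finally, your finiteness remark reaches the right conclusion for the wrong reason: once $-g \le a \le f$ with $f,g < \infty$ everywhere, finiteness of $a$ is immediate without invoking $-x$.
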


We are now in a position to prove a strict separation theorem.

\begin{thm}[Hahn-Banach strict separation]\label{HB groups strict}
Assume that $X$ is a connected, locally convex topological group, $C\subseteq X$ is closed and convex, while $x_0\notin C$. Then there exists a continuous additive function $\varphi : X\to \R$ such that 
\begin{align}\label{ineq strict}
\sup_{c\in C}\varphi(c) < \varphi(x_0).
\end{align}
\end{thm}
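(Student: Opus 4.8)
The plan is to reduce the strict separation statement to the already-established weak separation theorem (Theorem~\ref{HB groups}) by inflating the point $x_0$ and the set $C$ to sets with disjoint closures, one of which has nonempty interior, and then upgrading the resulting affine functional to a continuous additive one using Propositions~\ref{prop subadd} and~\ref{prop rho strict} together with Kaufman's theorem (Theorem~\ref{thm kaufman}). First I would use that $X$ is locally convex to choose an open convex neighbourhood $U$ of $0$ such that $(x_0+U)\cap C=\emptyset$; this is possible because $C$ is closed and $x_0\notin C$, so $X\setminus C$ is an open neighbourhood of $x_0$, and translating by $-x_0$ gives an open neighbourhood of $0$, inside which local convexity provides a convex open $U$. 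Shrinking $U$ further, I may also assume $U=-U$ is symmetric, or at least work with $U':=U\cap(-U)$, which is still an open convex neighbourhood of $0$ since finite intersections of convex sets are convex and the map $x\mapsto -x$ is a homeomorphism.

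Next I would set $C' := C + U'$ and $D' := \{x_0\}$, or more symmetrically $C' := C + U'$ and $D' := x_0 + U'$ when a little extra room is convenient. The set $C'$ is convex (sum of convex sets in a commutative monoid is convex, by the same argument as in~\eqref{conv is sum}-type reasoning, or directly from the definition of convexity), and it is open, hence equals its own interior, so $\int C' = C' \neq \emptyset$. Moreover $x_0 \in \int C'$ would contradict $(x_0+U')\cap C=\emptyset$ after translating, so in fact one arranges the hypothesis of Theorem~\ref{HB groups} to fail as stated — here is the subtlety: Theorem~\ref{HB groups} requires $D\cap \int C\neq\emptyset$, i.e. it separates sets that *do* meet, which is the wrong direction. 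So instead I would apply Theorem~\ref{HB groups} to $C$ and $C'' := X \setminus (x_0 + U')$? That is not convex either. The cleaner route: apply the weak separation theorem with the roles arranged so that $\int C'\neq\emptyset$ and the closure of $D'=x_0+U'$ meets $\int C'$ only if $x_0$ were in a suitable dilation of $C$; by Remark~\ref{rmk threshold} and Proposition~\ref{prop rho strict}, since $0\in\int(C+U')$ after recentering, $\rho_{C+U'-x_0}$ is continuous and finite, and $x_0\notin C$ forces $\rho_{C-x_0+U'}(0)$-type estimates giving a gap. Concretely I would translate so that $0\in\int C$ (replace $C$ by $C-c_0$ for $c_0\in\int C$, $x_0$ by $x_0-c_0$), form the gauge $\rho_{C}$, note $\rho_C$ is $\N$-sublinear (Proposition~\ref{prop mink}), continuous and finite (Proposition~\ref{prop rho strict}, using connectedness), and observe $\rho_C(x_0) \ge 1$ with in fact $\rho_C(x_0) > 1$ strictly because $C$ is closed: if $\rho_C(x_0)=1$ then $x_0\in qC$ for $q$ arbitrarily close to $1$, and a limiting argument using closedness of $C$ and continuity of $x\mapsto qx$ would put $x_0\in C$.

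Then I would take $f := \rho_C$ and $g := \iota_{\{x_0\}} - \rho_C(x_0)$ — or rather, I want a subadditive $g$ with $-g\le f$ concentrated near $x_0$. Set $g$ to be $+\infty$ off a suitable set and use that $\rho_C$ separates: since $\rho_C$ is $\N$-sublinear hence subadditive, and $\rho_C(x_0)>1\ge\sup_{c\in C}\rho_C(c)$, I apply Kaufman's theorem (Theorem~\ref{thm kaufman}) to $f=\rho_C$ and an appropriate subadditive $g$ with $-g(x)\le\rho_C(x)$ everywhere and $-g(x_0)$ close to $\rho_C(x_0)$, obtaining a finite additive $a$ with $-g\le a\le\rho_C$; then $a(c)\le\rho_C(c)\le 1$ for $c\in C$ while $a(x_0)\ge -g(x_0)>1$. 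Continuity of $a$ follows because $a\le\rho_C$ and $-a(x)=a(-x)\le\rho_C(-x)$, and $\rho_C$ is continuous at $0$ with value $0$, so $a$ is bounded above and below near $0$ by quantities tending to $0$, forcing $a$ continuous at $0$ and hence everywhere by additivity. Undoing the translation, $\varphi := a$ (adjusted by a constant, harmless for strict inequality, or kept additive since we recentered) satisfies~\eqref{ineq strict}.

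The main obstacle I anticipate is the strictness: getting $\rho_C(x_0) > 1$ rather than merely $\ge 1$ genuinely uses that $C$ is closed and requires a careful limiting argument — one must show that if $x_0 \in qC$ for a sequence $q_n \downarrow \rho_C(x_0)$, then passing to the limit (via continuity of $x\mapsto lx$ and closedness of $C$, together with monotonicity of dilations from Proposition~\ref{prop monotone}) yields $x_0 \in \rho_C(x_0)\,C \subseteq C$ when $\rho_C(x_0)\le 1$. A secondary technical point is verifying that the additive $a$ produced by Kaufman's purely algebraic theorem is automatically continuous; this is where $0\in\int C$ and Proposition~\ref{prop rho strict} are essential, squeezing $a$ between $\rho_C$ and $-\rho_C(-\,\cdot\,)$ near the identity. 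Connectedness enters only to guarantee $\rho_C$ is finite everywhere (so that $a$ is genuinely $\R$-valued), via the open-and-closed argument already recorded in the proof of Proposition~\ref{prop rho strict}.
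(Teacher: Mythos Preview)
Your proposal has a genuine gap: you translate so that $0\in\int C$, but the hypotheses do not give $\int C\neq\emptyset$. The set $C$ is merely closed and convex; it could be a singleton, or a lower-dimensional affine set, in which case there is no interior point $c_0$ to translate to the origin and the gauge $\rho_C$ is useless (it is $+\infty$ off $C$ and Proposition~\ref{prop rho strict} does not apply). The strictness argument you sketch, showing $\rho_C(x_0)>1$ from closedness of $C$, therefore never gets off the ground.

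The paper's proof makes the opposite choice of translation and of gauge, and this is exactly the missing idea. One translates $x_0$ to $0$ (not an interior point of $C$ to $0$), then uses that $C$ is closed and $0\notin C$ to find a \emph{convex open neighbourhood} $U$ of $0$ with $U\cap C=\emptyset$. Now $0\in\int U$ automatically, so $\rho_U$ is $\N$-sublinear, finite, and continuous by Propositions~\ref{prop mink} and~\ref{prop rho strict}; and since $c\notin U$ for every $c\in C$, Remark~\ref{rmk threshold} gives $\rho_U(c)\ge 1$ directly---no delicate limiting argument needed. One then sandwiches with $f=\rho_U$ and $g=\iota_C-1$ to get an affine $a$ with $a(0)\le 0$ and $a\ge 1$ on $C$; Proposition~\ref{prop subadd} makes $-a$ subadditive, and Kaufman's Theorem~\ref{thm kaufman} upgrades $a$ to an additive $\varphi$ with $a\le\varphi\le\rho_U$, hence $\varphi(0)=0<1\le\varphi(c)$. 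Your continuity argument for $\varphi$ via $|\varphi|\le\rho_U$ is correct and matches the paper. A secondary issue: your claim that $C+U'$ is convex ``by the same argument as in~\eqref{conv is sum}-type reasoning'' is false in general groups---see Remark~\ref{rmk sum}, which gives an explicit counterexample in $\Z^2$; the sum of two convex sets need not be convex without divisibility, which is not assumed here.
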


\begin{proof}
Assume without loss of generality that $x_0=0$. Since $C$ is closed, there exists a convex neighbourhood $U$ of $0$ such that $U\cap C = \emptyset$. Let $f = \rho_U$ and $g = \iota_C-1$. Then $f,g$ are convex and $-g\le f$. Thus, by~\cite[Theorem 2]{BG15}, there exists $a:X\to \R$ nonzero and affine such that $-g\le a \le f$. $a$ can be assumed to be everywhere finite because $\rho_U$ is everywhere finite (see~\cite[Corollary 3]{BG15}). Since $-g\le a \le f$, we have $a(0) \le 0$. Use Proposition~\ref{prop subadd} to write $a = \phi+\alpha$ where $\phi$ is additive and $\alpha\le 0$. 

Thus, again by Proposition~\ref{prop subadd}, $-a$ is subadditive. Since by Proposition~\ref{prop mink}, $\rho_U$ is subadditive, use Theorem~\ref{thm kaufman} to deduce the existence of an additive $\varphi:X\to \R$, such that $a\le \varphi \le \rho_U$. Since $a\ge g$ we have $\varphi\ge g$, and we have $\varphi(c) \ge 1$ for every $c\in C$ and $\varphi(0) = 0$, which proves~\eqref{ineq strict}. To prove the continuity of $\varphi$, note that since $\varphi$ is additive, $-\varphi(x) = \varphi(-x) \le \rho_U(-x) = \rho_U(x)$ and so $|\varphi(x)|\le \rho_U(x)$. Thus, we have $\varphi(x)-\varphi(y)| = |\varphi(x-y)| \le \rho_U(x-y)$. Since $\rho_U$ is continuous (in fact it is enough that $\rho_U$ is continuous at $x=0$), it follows that $\varphi$ is continuous. This concludes the proof.
\end{proof}

\begin{remark}\label{rmk sum}
In many classical topological vector space  proofs of the separation theorem, one deduces separation between sets from separation between a set and a point by applying the latter to the a set of the form $A-B$, where $A,B$ are convex. In general,  however,  if $A,B\subseteq X$ are convex subsets of a group, $A+B$ need not be convex. For example if $X=\Z^2$, and we choose $A= \{(0,1),(2,0)\}$, $B=\{(0,2),(1,0)\}$, then $A+B = \{(1,1),(2,2),(3,0),(0,3)\}$. Also, we have $3\cdot(1,2) = 2\cdot(0,3)+1\cdot(3,0)$ but $(1,2)\notin A+B$. 

On the other hand, if the group is divisible then convexity is preserved under taking sums of sets. Indeed, if $a_1,\dots,a_n\in A$, $b_1,\dots,b_n\in B$ and $m_1,\dots,m_n\in \N$ such that $\sum_{i=1}^nm_i = m$, then there exist $a,b\in X$ such that $ma = \sum_{i=1}^nm_ia_i$ and $mb=\sum_{i=1}^nm_ib_i$. Since $A,B$ are convex, we have $a\in A$ and $b\in B$. Thus, $\sum_{i=1}^nm_i(a_i+b_i) = m(a+b)$. Now, if we assume that $\sum_{i=1}^nm_i(a_i+b_i) = mx$, then if we assume further that we have unique divisibility (such as the case for locally convex topological groups, as shown in Proposition~\ref{prop singleton}), then $x=a+b\in A+B$, which proves that $A+B$ is indeed convex. \qede
\end{remark}

\begin{remark}
While $A+B$ need not be convex for convex $A,B\subseteq X$, as shown in Remark~\ref{rmk sum}, it is true that translations of convex sets are convex. Indeed, if $A\subseteq X$ is convex and $x_0\in X$ then $x_0+A$ is convex. To see this, let $a_1,\dots,a_n\in A$, $m_1,\dots,m_n\in \N$, $m=\sum_{i=1}^nm_i$, and assume $mx = \sum_{i=1}^nm_i(x_0+a_i) = mx_0+\sum_{i=1}^nm_ia_i$. Then $m(x-x_0) = \sum_{i=1}^nm_ia_i$. Since $A$ is convex, it follows that $x-x_0\in A$, which means that $x\in x_0+A$. Also, by~\cite[Proposition 3]{BG15}, if $T:Y\to X$ is additive and $A\subseteq X$ is convex, then $T^{-1}(A)$ is also convex in $Y$. \qede
\end{remark}

\begin{remark}
If $A$ is convex and $A+U$ is convex for every convex neighbourhood of $0$, then $\overline A$ is also convex. In particular, by Remark~\ref{rmk sum}, the closure of a convex set in a divisible locally convex topological group is convex. \qede
\end{remark}

\section{Extreme points in topological groups}\label{sec km}

\subsection{The Krein-Milman theorem in topological groups}

We begin with a few natural definitions.

\begin{definition}[Extreme points]\label{def extreme}
Let $X$ be a group and $A\subseteq X$. A point $x\in A$ is said to be an extreme point of $A$, if whenever $mx = \sum_{i=1}^nm_ix_i$, $m_i\in \N$, $\sum_{i=1}^nm_i = m$, $x_i\in A$, we have $x_1=\dots = x_n = x$. Denote that the set of extreme points of $A$ by $\E(A)$.
\end{definition}

\begin{definition}[Face of set]
A subset $F\subseteq A$ is said to be a face of $A$ if whenever $x_1,\dots,x_n\in A$, $m_1,\dots,m_n\in \N$, $m= \sum_{i=1}^nm_i$, $mx = \sum_{i=1}^nm_ix_i$ and $x\in F$, then $x_i\in F$ for all $1 \le i \le n$.
\end{definition}

As in vector spaces we have the following.

\begin{prop}[Maximisers are a face]\label{prop max}
Assume that $A\subseteq X$ is a compact convex subset of a topological group. Let $\varphi:X\to \R$ be additive and continuous. Then the set $F_{\varphi} = \big\{x\in A~\big|~ \varphi(x) = \max_{x\in A}\varphi(x)\big\}$ is a compact face of $A$.
\end{prop}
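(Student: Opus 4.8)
The plan is to verify the two assertions separately: first that $F_\varphi$ is compact, then that it is a face. Compactness is the easy half. Since $\varphi$ is continuous and $A$ is compact, $\varphi$ attains its maximum on $A$, say at value $\mu = \max_{x\in A}\varphi(x)$, so $F_\varphi$ is nonempty. Moreover $F_\varphi = A \cap \varphi^{-1}(\{\mu\})$, and since $X$ is Hausdorff the singleton $\{\mu\}\subseteq\R$ is closed, so $\varphi^{-1}(\{\mu\})$ is closed by continuity; hence $F_\varphi$ is a closed subset of the compact set $A$ and is therefore compact.

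For the face property, I would take $x_1,\dots,x_n \in A$, $m_1,\dots,m_n\in\N$ with $m = \sum_{i=1}^n m_i$ and $mx = \sum_{i=1}^n m_i x_i$, and suppose $x \in F_\varphi$, i.e. $\varphi(x) = \mu$. Applying $\varphi$ and using that it is additive (hence $\varphi(mx) = m\varphi(x)$ and $\varphi\big(\sum m_i x_i\big) = \sum m_i \varphi(x_i)$, which follows by iterating additivity), we get $m\mu = m\varphi(x) = \sum_{i=1}^n m_i \varphi(x_i)$. Since each $x_i \in A$ we have $\varphi(x_i) \le \mu$, so $\sum_{i=1}^n m_i \varphi(x_i) \le \sum_{i=1}^n m_i \mu = m\mu$, with equality forcing $\varphi(x_i) = \mu$ for every $i$ with $m_i \ge 1$ — that is, for all $i$. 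Hence $x_i \in F_\varphi$ for each $i$, which is exactly the defining condition for $F_\varphi$ to be a face of $A$. One should also note $F_\varphi \subseteq A$ by construction, so it is genuinely a subset of $A$ as required.

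The main subtlety — more a bookkeeping point than a real obstacle — is making sure the additive functional interacts correctly with the $\N$-combinations appearing in the definition of a face: one needs $\varphi(kx) = k\varphi(x)$ for $k\in\N$ and $\varphi\big(\sum_i m_i x_i\big) = \sum_i m_i\varphi(x_i)$, both of which are immediate consequences of additivity by induction, and it is worth recording explicitly that we do \emph{not} need $A$ to be convex for the face argument, only for the statement to be the natural one. (Convexity and compactness of $A$ are what make $F_\varphi$ a useful object downstream in the Krein–Milman argument, e.g. guaranteeing $F_\varphi$ itself has extreme points.) No divisibility or local convexity hypotheses are used here; the argument is purely order-theoretic once $\varphi$ is in hand.
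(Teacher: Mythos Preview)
Your proof is correct and follows essentially the same approach as the paper: compactness via $F_\varphi$ being a closed subset of the compact set $A$, and the face property via the chain $m\mu = m\varphi(x) = \sum_i m_i\varphi(x_i) \le m\mu$, forcing equality termwise. Your version is slightly more explicit (writing $F_\varphi = A\cap\varphi^{-1}(\{\mu\})$ and noting which hypotheses are actually used), but the argument is the same.
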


\begin{proof}
First, note that since $A$ is compact and $\varphi$ is continuous, then $F_{\varphi}$ is a nonempty compact set.  Assume that $mx = \sum_{i=1}^nm_ix_i$, where $x_1,\dots,x_n\in A$, $m_1,\dots,m_n\in \N$, $m=\sum_{i=1}^nm_i$, and $x\in F_{\varphi}$. We have
\begin{multline*}
m\max_{x\in A}\varphi(x) = m\varphi(x) = \varphi\left(\sum_{i=1}^nm_ix_i\right)  = \sum_{i=1}^nm_i\varphi(x_i) \le \sum_{i=1}^nm_i\max_{x\in A}\varphi(x) = m\max_{x\in A}\varphi(x).
\end{multline*}
Hence, we must have $\varphi(x_i) = \max_{x\in A}\varphi(x)$, or in other words $x_i\in F_{\varphi}$. This completes the proof.
\end{proof}

\begin{prop}[Existence of extreme points]\label{prop exist}
Assume that $X$ is a semidivisible, connected, locally convex group. Let $C\subseteq X$ be convex and compact. Then $\mathcal E (C)\neq \emptyset$.
\end{prop}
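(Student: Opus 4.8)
The plan is to run the classical transfinite argument for the existence of extreme points, with the Hahn--Banach step replaced by the group separation theorem proved above. Work with the family $\mathcal F$ of all nonempty closed faces of $C$, partially ordered by reverse inclusion. This family is nonempty: $C$ is convex, hence a face of itself, and it is compact, hence closed (our standing Hausdorff assumption). The first thing to check is the hypothesis of Zorn's lemma. Given a chain $\{F_\alpha\}$ in $\mathcal F$, its intersection $F=\bigcap_\alpha F_\alpha$ is nonempty because each $F_\alpha$ is a closed subset of the compact set $C$ and the chain has the finite intersection property; it is closed; and it is a face because an arbitrary intersection of faces is a face (if $mx=\sum_i m_i x_i$ with $x_i\in C$, $\sum_i m_i=m$, and $x\in F$, then $x\in F_\alpha$ forces $x_i\in F_\alpha$ for every $\alpha$, so $x_i\in F$). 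Hence $F\in\mathcal F$ bounds the chain, and Zorn's lemma yields a minimal element $F_0\in\mathcal F$, that is, a minimal nonempty closed face of $C$.

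Next I would show $F_0$ is a singleton. Suppose it contains two distinct points $p\neq q$. Since $X$ is Hausdorff it is $T_1$, so by Proposition~\ref{prop singleton} the singleton $\{p\}$ is closed and convex; as $q\notin\{p\}$, Theorem~\ref{HB groups strict} (its hypotheses --- connected, locally convex topological group --- being met) produces a continuous additive $\varphi:X\to\R$ with $\varphi(p)<\varphi(q)$. Put $\mu=\max_{z\in F_0}\varphi(z)$, attained because $F_0$ is compact and $\varphi$ continuous, and let $G=\{z\in F_0\mid \varphi(z)=\mu\}$. Then $G$ is a nonempty closed (hence compact) subset of $F_0$, and by exactly the averaging computation of Proposition~\ref{prop max} it is a face of $F_0$: if $mw=\sum_i m_i w_i$ with $w_i\in F_0$, $\sum_i m_i=m$, $w\in G$, then $m\mu=\varphi(mw)=\sum_i m_i\varphi(w_i)\le m\mu$ forces $\varphi(w_i)=\mu$, i.e. $w_i\in G$. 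Since a face of a face is a face (a one-line check using the two definitions), $G$ is a closed face of $C$, so $G\in\mathcal F$; but $\varphi(p)<\varphi(q)$ means at least one of $p,q$ lies outside $G$, so $G\subsetneq F_0$, contradicting minimality of $F_0$.

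Therefore $F_0=\{e\}$ for some $e\in C$, and since $\{e\}$ is a face of $C$, the relation $me=\sum_i m_i x_i$ with $x_i\in C$ and $\sum_i m_i=m$ forces each $x_i\in\{e\}$, i.e. $x_i=e$; thus $e\in\E(C)$ (assuming, as is implicit, $C\neq\emptyset$). The one genuinely delicate point is the separation step: unlike in a topological vector space, one cannot invoke a classical Hahn--Banach theorem, and it is precisely to have Theorem~\ref{HB groups strict} available --- together with the fact from Proposition~\ref{prop singleton} that points are closed and convex --- that connectedness and local convexity are imposed (and, for the sandwich theorem underlying Theorem~\ref{HB groups strict}, semidivisibility). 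Everything else is a routine transcription of the vector-space proof, the structural facts being that intersections of faces are faces, faces of faces are faces, and the maximiser set of a continuous additive functional is a face.
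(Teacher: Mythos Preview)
Your proof is correct and follows essentially the same strategy as the paper's: use the group separation theorem (Theorem~\ref{HB groups strict}) together with Proposition~\ref{prop singleton} to produce a strictly smaller compact face whenever a face has more than one point, and pass to a minimal face via compactness. Your formulation through Zorn's lemma on the family of all nonempty closed faces is in fact cleaner than the paper's somewhat informal ``sequence of faces'' description, but the underlying argument is the same.
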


\begin{proof}
If $C$ contains only one point, then since it is convex, we have $\E (C) = C \neq \emptyset$. Assume then that $C$ contains at least two points $x\neq y$. By Proposition~\ref{prop singleton}, $\{x\}$ is convex and since $y\notin \{x\}$, by Theorem~\ref{HB groups strict} there exists $\varphi:X\to \R$ additive such that $\varphi(y)<\varphi(x)$. Thus, by Proposition~\ref{prop max}, $F_{\varphi}$ is a compact face of $C$ and clearly $y\notin F_{\varphi}$. 

Next, repeat the procedure for the set $F_{\varphi}$ instead of for $C$. Altogether, we obtain a sequence of compact faces $\{F_{\varphi}\}_{\varphi}$, which is decreasing. It has a nonempty upper bound, which is the intersection. Choose a minimal elements for the sequence and call it $F$. $F$ is indeed a compact face, since if $mx =\sum_{i=1}^nm_ix_i$, $m_1,\dots,m_n\in \N$, $m=\sum_{i=1}^nm_i$, $x_1,\dots,x_n\in C$ and $x\in F$, then $x\in F_{\varphi}$ for all $\varphi$ in the sequence. Then, since $F_{\varphi}$ is a compact face, we get that $x_i\in F_{\varphi}$ for all $\varphi$. Thus, $x_i\in \bigcap_{\varphi}F_{\varphi}\subseteq F$. 

Thus $F$ is also a face. The compactness of $F$ follows from it being an intersection of compact sets. If $F$ contains more than one point, we can repeat the same procedure and get a contradiction to the maximality of $F$. This completes the proof.
\end{proof}

\begin{thm}[Krein-Milman theorem for groups]\label{km single}
Assume that $X$ is a semidivisible, connected locally convex group. If $C\subseteq X$ is compact and convex, then 
\begin{align*}
C = \overline{\conv}\big(\E(C)\big),
\end{align*}
that is, $C$ is equal to the closed convex hull of it extreme points. 
\end{thm}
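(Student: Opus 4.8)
The plan is to follow the classical Krein--Milman argument, using the machinery the paper has already set up. First, one inclusion is essentially free: by Proposition~\ref{prop singleton} singletons are convex in this setting, and since $C$ is convex and closed (compact in a Hausdorff group), $\overline{\conv}(\E(C)) \subseteq \overline{\conv}(C) = C$. So the real content is the reverse inclusion $C \subseteq \overline{\conv}(\E(C))$.

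For the reverse inclusion I would argue by contradiction following Milman's route: suppose there is a point $x_0 \in C$ with $x_0 \notin K := \overline{\conv}(\E(C))$. Note $K$ is compact (closed subset of compact $C$) and convex — here one should check that $\overline{\conv}$ of a set really is convex in this setting, which is available since $X$ is (after Proposition~\ref{prop singleton}) uniquely divisible and locally convex, so by the Remark following Remark~\ref{rmk sum} closures of convex sets are convex, and $\conv$ of any set is convex by definition. Since $X$ is connected, locally convex, and $K$ is closed convex with $x_0 \notin K$, Theorem~\ref{HB groups strict} (Hahn--Banach strict separation) yields a continuous additive $\varphi : X \to \R$ with $\sup_{k \in K}\varphi(k) < \varphi(x_0)$. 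In particular $\max_{c\in C}\varphi(c) \ge \varphi(x_0) > \sup_K \varphi \ge \sup_{\E(C)}\varphi$, so the face $F_\varphi = \{x \in C : \varphi(x) = \max_{c\in C}\varphi(c)\}$, which is a nonempty compact face of $C$ by Proposition~\ref{prop max}, is disjoint from $\E(C)$.

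Now I would derive the contradiction by showing $F_\varphi$ contains an extreme point of $C$. Apply the existence result, Proposition~\ref{prop exist}, to the compact convex set $F_\varphi$ (it is convex: a face of a convex set that is itself carved out as a maximiser set is convex, or one checks directly from the face property combined with convexity of $C$): there is a point $e \in \E(F_\varphi)$. The key lemma is transitivity of faces: if $F$ is a face of $C$ and $e$ is an extreme point of $F$, then $e$ is an extreme point of $C$. This is proved by unwinding the definitions — if $me = \sum m_i x_i$ with $x_i \in C$, then since $e \in F$ and $F$ is a face of $C$, all $x_i \in F$; then extremality of $e$ in $F$ forces all $x_i = e$. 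Hence $e \in \E(C) \cap F_\varphi$, contradicting $F_\varphi \cap \E(C) = \emptyset$. Therefore no such $x_0$ exists and $C \subseteq \overline{\conv}(\E(C))$, completing the proof.

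The main obstacle I anticipate is not the skeleton of the argument, which is the standard Milman proof, but the book-keeping peculiar to the group setting: verifying that $\overline{\conv}(\E(C))$ is genuinely convex and compact, that faces and maximiser-sets remain convex under the combinatorial definition of convexity used here (with natural-number coefficients rather than convex combinations), and that the transitivity-of-faces lemma goes through verbatim with this definition. All of these reduce to manipulating relations of the form $mx = \sum m_i x_i$ and invoking unique divisibility from Proposition~\ref{prop singleton}, so I would state the transitivity lemma explicitly as a short auxiliary step and otherwise lean on the Remarks after Remark~\ref{rmk sum} for the closure/convexity facts. One should also double-check that Proposition~\ref{prop exist}'s hypotheses are met by $F_\varphi$: it is compact, convex, and sits inside the semidivisible connected locally convex group $X$, so the hypotheses transfer without issue.
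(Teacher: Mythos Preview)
Your argument is correct and follows essentially the same route as the paper: contradiction via strict separation (Theorem~\ref{HB groups strict}), formation of the maximiser face $F_\varphi$ (Proposition~\ref{prop max}), existence of an extreme point in $F_\varphi$ (Proposition~\ref{prop exist}), and the observation that an extreme point of a face is an extreme point of $C$. The paper's proof is terser---it asserts without comment that $F_\varphi$ ``has an extreme point, which is also an extreme point of $C$'' and that $B=\overline{\conv}(\E(C))$ is compact convex---whereas you spell out the transitivity-of-faces step and the convexity/compactness bookkeeping, but the underlying strategy is identical.
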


\begin{proof}
Let $B$ be the closed convex hull of the extreme points of $A$. We want to show $B=C$. Since $C$ is convex and compact, we clearly have $B\subseteq C$. Assume to the contrary that that there exists $x\in C\setminus B$. $B$ is a compact convex set. Thus, by Theorem~\ref{HB groups strict}, there exists $\varphi:X\to \R$ additive and continuous such that $\sup_{b\in B}\varphi(b)<\varphi(x)$. Then construct the closed face $F_{\varphi}$ as before. We have $B\cap F_{\varphi} = \emptyset$. By Proposition~\ref{prop exist}, $F_{\varphi}$ has an extreme point, which is also an extreme point of $C$. This is a contradiction, and so $B=C$. 
\end{proof}

\begin{remark}
If $X$ is a meet semilattice, so that with $\wedge$ as the monoid operation every element is an idempotent, then for an additive $\varphi$, $\varphi(ny)=\varphi(y)$ for $n \in \N$. This implies that the only finite value of $\varphi$ is zero. Hence, a direct analogue of our results does not hold in this monoid. Note that the extreme points of a convex set are the minimal elements and a Krein-Milman theorem holds in this case  in an appropriate order topology \cite{Pon14}. This can be derived from Stone's lemma for monoids as given in \cite{BG15}. \qede
\end{remark}

\begin{example}[Krein-Milman theorem for the positive hyperbolic group]\label{ex hyper km}
Let $X$ be the positive hyperbolic group, as defined in Example~\ref{ex hyper}. It was noted in Example~\ref{ex hyper}, that this is a connected topological group, which is also locally convex. It was also noted that $X$ is divisible. Let $C\subseteq X$ be a compact subset. Let $\Lambda:\R\to X$ be the map $\theta\mapsto M(\theta)$. Since $\sinh(\cdot)$ is strictly increasing, it follows that $\Lambda$ is a bijection. More specifically, if $x = \left[\begin{array}{cc} a & b \\ b & a\end{array}\right]\in X$, then $\Lambda^{-1}$ is given by
\[\Lambda^{-1}(x) = \mathrm{arcsinh}(b)= \ln  \left(b+\sqrt{b^2+1} \right).\]
In particular, we have $C = \Lambda\left(\Lambda^{-1}(C)\right)$. Also, as was shown in Example~\ref{ex hyper}, if $U\subseteq X$ is open and $M(\theta)\in U$, then there exists $\eps>0$ such that $\Lambda\big((\theta-\eps,\theta+\eps)\big)\subseteq U$, and so $\Lambda$ is a continuous map.  In particular, if $C\subseteq X$ is compact, then $\Lambda^{-1}(C) \subseteq \R$ is compact. Therefore, we have $\conv\left(\Lambda^{-1}(C)\right) \subseteq [\alpha,\beta]$, where $\alpha = \min\big\{\theta~\big|~\theta \in \Lambda^{-1}(C)\big\}$ and $\beta = \max\big\{\theta~\big|~\theta \in \Lambda^{-1}(C)\big\}$. For $M(\theta)\in C$ to be an extreme point, we need that $mM(\theta) = \sum_{i=1}^nm_iM(\theta_i)$ implies $\theta = \theta_i$, $1\le i \le n$. But if $mM(\theta) = \sum_{i=1}^nm_iM(\theta_i)$, then we have $m\theta = \sum_{i=1}^nm_i\theta_i$, and $\theta$ must be an extreme point of $\Lambda^{-1}(C)$. Altogether $\E(C) = \{M(\alpha),M(\beta)\}$, and by Theorem~\ref{km single}, we have that $C = \overline{\conv}\big(\{M(\alpha),M(\beta)\}\big)=M([\alpha,\beta])$, and $C$ is a curve in $\R^4$. \qede
\end{example}

\begin{remark}
The matrices $M(\theta)$ with $\theta \ge 0$ form a partially divisible submonoid, say $H$. Since it is known that a direct product of $p$-semidivisible structures is a similar structure, we have abundant other examples. For example, we may consider any of the groups $X \times \R$ or $X \times X$ or $H \times H$. \qede

\end{remark}

We observe in passing that we can use these extreme point ideas to study the structure of convex cones in topological groups. This allows one use ordered groups  to carefully analysis vector optimisation problems \cite{BG15}.

\subsection{Milman converse theorem in groups}

We should also like to have a group version for the Milman converse theorem \cite{BV10}. 
This turns out to require additional restrictions on the underlying space.
For the converse we first need the following basic property.

\begin{thm}[Theorem 7.4 in \cite{HR79}]\label{thm cover}
Assume that $X$ is a connected Hausdorff topological group, and that $U$ is an open set containing $0$. Then $X = \bigcup_{k=1}^{\infty}kU$.
\end{thm}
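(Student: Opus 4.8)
The plan is to prove that a connected Hausdorff topological group $X$ is covered by the dilates $kU$ of any open neighbourhood $U$ of $0$, mimicking the classical argument that a connected topological group is generated by any neighbourhood of the identity. First I would pass to a symmetric open neighbourhood: replacing $U$ by $V = U \cap (-U)$, which is still open (since $x \mapsto -x$ is a homeomorphism) and contains $0$, and noting that $V \subseteq U$ forces $kV \subseteq kU$ for every $k$, so it suffices to prove the statement for $V$. The point of symmetry is that it makes $H := \bigcup_{k=1}^\infty kV$ closed under the group operations.

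Next I would verify that $H$ is a subgroup of $X$. It is nonempty since $0 \in V \subseteq H$ (or $0 = 0\cdot v$ for any $v \in V$; in any case $V \subseteq H$). It is closed under addition because $jV + kV \subseteq (j+k)V$ directly from the definition of $mV$ as $m$-fold sums, and it is closed under negation because $V$ is symmetric, so $-(kV) = k(-V) = kV \subseteq H$. Hence $H$ is a subgroup. Moreover $H$ is open: for each $k$, the set $kV$ is a sum of open sets and hence open (translation by a fixed element is a homeomorphism, so $v_1 + \dots + v_k = \bigcup_{w \in V} (v_1 + \dots + v_{k-1} + w)$ is a union of open sets), so $H$ is a union of open sets.

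Then I would invoke the standard fact that an open subgroup of a topological group is also closed: its complement is the union of the nontrivial cosets $x + H$, each of which is open (being a translate of the open set $H$), so $X \setminus H$ is open and $H$ is closed. Finally, since $X$ is connected and $H$ is a nonempty subset of $X$ that is both open and closed, we conclude $H = X$, i.e.\ $X = \bigcup_{k=1}^\infty kV \subseteq \bigcup_{k=1}^\infty kU$, and the reverse inclusion is trivial.

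I do not expect any serious obstacle here — the argument is the textbook proof that a connected group is generated by any neighbourhood of the identity, and the only points requiring a moment's care are (i) checking that the abstract dilation notation $kV$ (sums of $k$ elements of $V$) matches the $k$-fold sumset used in the generation argument, which it does by \eqref{sum set}, and (ii) making sure that symmetrising $U$ is harmless, which it is by the monotonicity $V \subseteq U \Rightarrow kV \subseteq kU$. Since the paper only needs this as an auxiliary tool (for the continuity/finiteness arguments in Proposition~\ref{prop rho strict} and the Milman converse), citing it as Theorem~7.4 of \cite{HR79} is also perfectly legitimate, but the self-contained proof above is short enough to include.
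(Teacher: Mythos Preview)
Your proof is correct and is precisely the standard argument found in Hewitt--Ross. The paper itself does not prove this statement at all: it merely quotes it as Theorem~7.4 of \cite{HR79} and uses it as a black box in Proposition~\ref{prop rho strict} and Proposition~\ref{prop cont}. So there is nothing to compare against; your self-contained proof is a strict addition to what the paper provides, and your closing remark that a citation would also suffice is exactly what the authors chose to do.
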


\begin{prop}\label{prop cont}
Assume that $X$ is a uniquely divisible, connected locally convex topological group. Let $q = \frac m l \in \Q$, and let $a,b\in X$. Define the function $F=F_{a,b}:\Q\to X$ by $F(q) = x\in X$, where $x$ satisfies $lx = ma+(l-m)b$. Then $F$ is continuous on $\Q$.
\end{prop}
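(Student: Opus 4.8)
The plan is to show that $F$ is well-defined first, then establish continuity by reducing to continuity at a single rational and exploiting the additive/translation structure of the group. For well-definedness: given $q = \frac{m}{l}$, uniqueness of $x$ with $lx = ma + (l-m)b$ follows from unique divisibility (the map $x \mapsto lx$ is injective), so $F(q)$ is unambiguously defined; one should also check that $F(q)$ does not depend on the particular representation $q = \frac{m}{l}$, i.e. replacing $(m,l)$ by $(km,kl)$ yields the same $x$, which again is immediate from injectivity of multiplication by $k$. It is worth noting that such an $x$ \emph{exists} because $X$ is divisible: $ma + (l-m)b$ is divisible by $l$.

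For continuity, the key observation is that $F$ is ``affine'' in $q$ in the following sense: if $q_1 = \frac{m_1}{l}$ and $q_2 = \frac{m_2}{l}$ share a common denominator $l$, then $l\big(F(q_1) - F(q_2)\big) = (m_1 - m_2)(a - b)$, so $F(q_1) - F(q_2) = F_{0}(q_1 - q_2)$ where $F_0 = F_{a-b,\,0}$ satisfies $l\,F_0(t) = (m_1-m_2)(a-b)$ for $t = \frac{m_1-m_2}{l}$. Thus by the translation-invariance of the topology (valid in any topological group), it suffices to prove continuity of $F$ at $q = 0$, where $F(0) = 0$. So I would fix a neighbourhood $U$ of $0$ in $X$; since $X$ is locally convex I may take $U$ open and convex, and since $X$ is connected, Theorem~\ref{thm cover} gives $X = \bigcup_{k=1}^\infty kU$. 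I then want to find $\delta > 0$ such that $|q| < \delta$ implies $F(q) \in U$. Write $q = \frac{m}{l}$ with $0 \le m$ small relative to $l$; then $l\,F(q) = m(a-b)$. Pick $k$ with $a - b \in kU$, so $k(a-b) \in k^2 U \subseteq \ldots$; more usefully, $a-b \in kU$ means there are $u_1,\dots,u_k \in U$ with $a - b = \sum u_i$. Then for $q = \frac{m}{l}$, $l F(q) = m\sum_{i=1}^k u_i = \sum_{i=1}^k (m u_i)$, and since $U$ is convex and $0 \in U$ (hence $\frac{m}{l}$-dilates behave monotonically, Proposition~\ref{prop monotone}), one checks $F(q) \in \frac{mk}{l} U \subseteq U$ provided $\frac{mk}{l} \le 1$, i.e. $|q| \le \frac{1}{k}$. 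Taking $\delta = \frac{1}{k}$ finishes continuity at $0$.

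The main obstacle I anticipate is the last step: carefully justifying that $l\,F(q) = \sum_{i=1}^k (m u_i)$ with $u_i \in U$ forces $F(q) \in U$. This is precisely where one needs convexity of $U$ together with unique divisibility: the element $F(q)$ satisfies $l F(q) = \sum_{i=1}^k m\, u_i$ where $\sum_{i=1}^k m = km$, so $F(q)$ lies in the rational dilation $\tfrac{km}{l}U$ in the sense of Definition~\ref{def dilation}, and by Proposition~\ref{prop monotone} (monotonicity of dilations, using $0 \in U$) together with Remark~\ref{rmk threshold}, if $\tfrac{km}{l} \le 1$ then $\tfrac{km}{l}U \subseteq 1\cdot U = U$ (the last equality because $U$ is convex). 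One must also handle negative $q$, but since $b - a \in k'U$ for some $k'$ by the same argument, replacing $k$ by $\max(k,k')$ covers both signs symmetrically. Everything else — well-definedness, the reduction to $q=0$ via translation invariance, and the bookkeeping with common denominators — is routine.
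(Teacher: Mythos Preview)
Your argument is correct, and the key tools---Theorem~\ref{thm cover} to absorb $a-b$ into some $kU$, and Proposition~\ref{prop monotone} to pass from $\frac{km}{l}U$ back into $U$ when $\frac{km}{l}\le 1$---are exactly the ones the paper uses. There is one notational slip: once you have reduced via $F(q_1)-F(q_2)=F_0(q_1-q_2)$, the function whose continuity at $0$ you establish is $F_0=F_{a-b,0}$, not $F$; it is $F_0(0)=0$ and $l\,F_0(q)=m(a-b)$, whereas $F(0)=b$. This does not affect the substance.

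The route differs from the paper's in organisation. The paper argues directly at an arbitrary $q$: it takes a sequence $q_j\to q$, expands $l\,l_j\big(F(q)-F(q_j)\big)$ as an integer combination of $a$ and $b$ separately, chooses $k$ with both $a,b\in kU$, bounds the two integer coefficients by $\frac{l\,l_j}{2k}$ for large $j$, and then combines the two halves inside $U$ using convexity. Your reduction $F(q_1)-F(q_2)=F_0(q_1-q_2)$ collapses this to continuity of a single map at a single point and replaces the pair $a,b$ by the single element $a-b$, which streamlines the bookkeeping (one absorption into $kU$ instead of two, no splitting of $U$ into halves). The paper's version, on the other hand, makes the dependence on both endpoints explicit, which is perhaps closer in spirit to how the map is used in Corollary~\ref{prop conv union}. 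Either way the analytic content is the same: connectedness via Theorem~\ref{thm cover} supplies the absorbing $kU$, and monotonicity of rational dilations converts smallness of $q$ into membership in $U$.
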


\begin{proof}
First, notice that since $X$ is uniquely divisible, the function $F_{a,b}$ is well defined. Let $U$ be an open convex neighbourhood of $0$. By Theorem~\ref{thm cover}, there exists $k\in \N$ such that $a,b\in kU$. Next, assume that $q_j = \frac{m_j}{l_j} \to q = \frac m l$ and let $x_j = F(q_j)$ and $x=F(q)$. Thus, we have $l_jx_j = m_ja+(l_j-m_j)b$ and $lx = ma+(l-m)b$. Therefore, we also have $ll_j(x-x_j) = l_j\big(ma+(l-m)b\big)-l\big(m_ja+(l_j-m_j)b\big) = \big(l_jm-lm_j\big)a + \big(l_j(l-m)-l(l_j-m_j)\big)b$. Now, since $q_j\to q$, for every $k\in \N$, if $j$ is sufficiently large, we have $|l_jm-lm_j| \le \frac{l\, l_j}k$ and $|\, l_j(l-m)-l(l_j-m_j)| \le \frac{l\, l_j}{2k}$. Thus, $\big(l_jm-lm_j\big)a \in \big(l_jm-lm_j\big)kU \stackrel{(*)}{\subseteq} \frac{l\, l_j k}{2k}U\subseteq \frac{l\, l_j}{2}U$, where in ($*$) we used Proposition~\ref{prop monotone}. Similarly, we can show that for sufficiently large $j\in \N$, we have $\big(l_j(l-m)-l(l_j-m_j)\big)b \in \frac{l\, l_j}{2}U$. 

Altogether, we have that $l \, l_j x \in \frac{l\, l_j}{2}U+ \frac{l\, l_j}{2}U$. Hence, we can write $l\, l_j (x-x_j) = u+u'$, where $2u = \sum_{i=1}^nm_iu_i$, $2u' = \sum_{i=1}^{n'}m_i'u_i'$, $u_i, u_i' \in U$, $\sum_{i=1}^nm_i = \sum_{i=1}^{n'} = l\, l_j$. Hence, $2l\, l_j (x-x_j) = \sum_{i=1}^nm_iu_i+\sum_{i=1}^{n'}m_i'u_i'$ and $\sum_{i=1}^nm_i+\sum_{i=1}^{n'}m_i' = 2l\, l_j$. Since $U$ is convex, it follows that $x-x_j\in U$. Since $U$ is arbitrary, it follows that $F_{a,b}(q_j) \to F_{a,b}(q)$, which proves that $F$ is continuous on $\Q$. 
\end{proof}

\begin{cor}[Convex hull of compact convex sets]\label{prop conv union}
Assume that $X$ is a uniquely divisible, locally convex topological group. Let $A,B\subseteq X$ be  compact convex sets. Then $\conv\big(A\cup B\big)$ is compact. More generally, if $A_1,\dots,A_k\subseteq X$ are compact and convex, then $\conv\left(\bigcup_{j=1}^kA_j\right)$ is compact.
\end{cor}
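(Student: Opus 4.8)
The plan is to reduce the general statement to the two-set case and then handle two compact convex sets by realizing $\conv(A\cup B)$ as the continuous image of a compact set. First I would observe that for convex sets $A_1,\dots,A_k$ with $A = \conv\left(\bigcup_{j=1}^{k-1}A_j\right)$ already known to be compact and convex by induction, one has $\conv\left(\bigcup_{j=1}^k A_j\right) = \conv(A\cup A_k)$; this follows directly from the definition of convex hull in a monoid, since any finite $\N$-combination witnessing membership in the left side can be regrouped into a combination of an element of $A$ and an element of $A_k$. So the whole statement follows once the case $k=2$ is established.

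For two compact convex sets $A,B$, the key claim is that
\begin{align*}
\conv(A\cup B) = \big\{F_{a,b}(q) ~\big|~ a\in A,~ b\in B,~ q\in \Q\cap[0,1]\big\},
\end{align*}
where $F_{a,b}$ is the function from Proposition~\ref{prop cont}, i.e. $F_{a,b}(q)$ is the unique $x$ with $lx = ma+(l-m)b$ for $q=m/l$. The inclusion $\supseteq$ is immediate since each such $x$ is an $\N$-combination of points of $A\cup B$. For $\subseteq$, take $x\in\conv(A\cup B)$, so $mx = \sum_i m_i a_i + \sum_j n_j b_j$ with $a_i\in A$, $b_j\in B$ and $\sum_i m_i + \sum_j n_j = m$. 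Using that $A$ and $B$ are convex (and that $X$ is uniquely divisible, so the relevant averages exist and are unique), collapse $\sum_i m_i a_i$ to $(\sum_i m_i)\, a$ for a single $a\in A$ and likewise $\sum_j n_j b_j$ to $(\sum_j n_j)\, b$ for a single $b\in B$; then $x = F_{a,b}(q)$ with $q = (\sum_i m_i)/m \in \Q\cap[0,1]$.

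Granting the claim, I would finish as follows. Define $G : A\times B\times(\Q\cap[0,1]) \to X$ by $G(a,b,q) = F_{a,b}(q)$. Proposition~\ref{prop cont} gives continuity in $q$ for fixed $a,b$; the same argument, tracking $a$ and $b$ as well (they enter the defining relation $lx = ma+(l-m)b$ linearly, and $X$ is a topological group so addition and the divisions $\phi_l^{-1}$ are continuous), shows $G$ is jointly continuous. Now $\Q\cap[0,1]$ is not compact, but since $F_{a,b}$ is continuous on all of $\Q$ and $\Q$ is dense in $[0,1]$, the map extends continuously to $[0,1]$ — concretely, $G$ extends to $\tilde G : A\times B\times [0,1]\to X$ because the image of $G$ lies in the compact set... here is the subtlety. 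Then $\conv(A\cup B)$ is the image under $\tilde G$ of the compact set $A\times B\times[0,1]$ intersected appropriately, hence compact, provided we also check it is closed; but a continuous image of a compact set in a Hausdorff space is automatically compact and closed, so we are done.

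The main obstacle is exactly the non-compactness of $\Q\cap[0,1]$: one needs the continuous extension of $q\mapsto F_{a,b}(q)$ from $\Q$ to $[0,1]$, and more importantly a locally uniform (in $a,b$) version, so that $\overline{G(A\times B\times(\Q\cap[0,1]))}$ is the image of a genuinely compact domain. The honest route is to show that the closure of $\conv(A\cup B)$ is already contained in the image of $\tilde G$ on $A\times B\times[0,1]$, i.e. that the set $\{F_{a,b}(q)\}$ is itself closed; this requires a compactness/net argument using that $A$ and $B$ are compact and that the defining relation $lx=ma+(l-m)b$ passes to limits. I expect the bookkeeping to push $F$ through limits — and the verification that the resulting limit point is again of the form $F_{a,b}(q)$ for some $a\in A$, $b\in B$, $q\in[0,1]$ — to be the technically delicate heart of the proof; everything else is routine given Proposition~\ref{prop cont} and Theorem~\ref{thm cover}.
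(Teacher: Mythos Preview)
Your approach is essentially identical to the paper's: write $\conv(A\cup B)=\{F_{a,b}(q):a\in A,\ b\in B,\ q\in\Q\cap[0,1]\}$ via the same regrouping argument, invoke joint continuity of $(a,b,q)\mapsto F_{a,b}(q)$, and reduce the $k$-set case to the two-set case by induction (the paper uses only the inclusion $\conv(\bigcup_{j\le k}A_j)\subseteq\conv(\conv(\bigcup_{j<k}A_j)\cup A_k)$, but your equality is correct and cleaner).

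The difference is that you have put your finger on a genuine error in the paper. The paper's proof simply asserts that ``the set $A\times B\times \Q\cap[0,1]$ is compact'' and concludes; but $\Q\cap[0,1]$ is of course not compact, so this step is wrong. Your attempt to repair it by extending $F_{a,b}$ continuously to $[0,1]$ is the natural idea, but it cannot rescue the statement as written: take $X=\R$ (uniquely divisible, locally convex, even connected), $A=\{0\}$, $B=\{1\}$; then $\conv(A\cup B)=\Q\cap[0,1]$, which is not compact. The paper itself acknowledges elsewhere (in the remark following the minimax examples) that ``we cannot hope for the convex hull of a pair of points to be anything better than a precompact set.'' So the obstacle you flagged is not a technicality to be overcome but a counterexample to the corollary; at best your extension argument would yield compactness of $\overline{\conv}(A\cup B)$, which is what is actually used downstream in the Milman converse, and which would require an additional completeness hypothesis to secure.
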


\begin{proof}
Let $x\in \conv\big(A\cup B\big)$. Then $mx = \sum_{i=1}^nm_ix_i = \sum_{i=1}^{n'}m_ix_i+\sum_{i=n'+1}^nm_ix_i$, where the first sum contains elements from $A$ and the second sum contains elements from $B$. Let $m'=\sum_{i=1}^{n'}m_i$. Since $X$ is divisible and $A$ is convex, there exists $a\in A$ such that $\sum_{i=1}^{n'}m_ix_i = m'a$. Similarly, we have $\sum_{i=n'+1}^nm_ix_i = (m-m')b$, where $b\in B$. Altogether, we have $mx = m'a+(m-m')b$, or in other words $x = F_{a,b}(m'/m)$. Hence we can write
\begin{align*}
\conv\big(A\cup B\big) = \big\{F_{a,b}(q)~\big|~ q\in \Q\cap[0,1], a\in A, b\in B\big\}.
\end{align*}
Since the operations on $X$ are continuous, using Proposition~\ref{prop cont}, the map $(a,b,q)\mapsto F_{a,b}(q)$ is continuous. Since the set $A\times B\times \Q\cap[0,1]$ is compact, it follows that $\conv\big(A\cup B\big)$ is compact. 

To prove the second assertion, use the fact that
\begin{align*}
\conv\left(\bigcup_{j=1}^kA_j\right) \subseteq \conv\left(\conv\left(\bigcup_{j=1}^{k-1}A_j\right)\bigcup A_k\right)
\end{align*}
The result now follows by induction on $k$.
\end{proof}

We are now ready for the promised converse theorem.

\begin{thm}[Milman converse theorem for groups]
Assume that $X$ is a uniquely divisible, locally convex topological group (as holds in a locally convex vector space). Assume that $C\subseteq X$ is a compact set such that $\overline{\conv}(C)$ is compact. Then 
\begin{align*}
\E\left(\,\overline{\conv}(C)\right) \subseteq C.
\end{align*}
\end{thm}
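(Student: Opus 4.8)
The plan is to mimic the classical Milman argument. Let $K = \overline{\conv}(C)$, which is compact by hypothesis, and suppose for contradiction that there is an extreme point $e$ of $K$ with $e \notin C$. Since $C$ is compact and $X$ is Hausdorff, $C$ is closed, so there is an open convex neighbourhood $U$ of $0$ with $(e + U) \cap C = \emptyset$; we may also shrink $U$ so that $e + U + U$ misses $C$ (using continuity of addition and local convexity). The idea is to cover the compact set $C$ by finitely many translates $c_j + U$ of a small convex neighbourhood, replace each piece by the compact convex set $A_j := \overline{\conv}\big(C \cap (c_j + U)\big) \subseteq K$, and observe that $C \subseteq \bigcup_j A_j$, hence $K = \overline{\conv}(C) \subseteq \overline{\conv}\big(\bigcup_j A_j\big)$. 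By Corollary~\ref{prop conv union}, $\conv\big(\bigcup_j A_j\big)$ is already compact, hence closed, so in fact $K = \conv\big(\bigcup_j A_j\big)$.

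Now I would use extremality of $e$. Since $e \in K = \conv\big(\bigcup_j A_j\big)$, we can write $m e = \sum_{i=1}^n m_i x_i$ with $x_i \in \bigcup_j A_j$, $\sum m_i = m$. Grouping the $x_i$ by which $A_j$ they lie in and using divisibility together with convexity of each $A_j$ (exactly as in the proof of Corollary~\ref{prop conv union}), we can rewrite this as $m e = \sum_j m'_j a_j$ with $a_j \in A_j$ and $\sum_j m'_j = m$; that is, $e$ is a ``convex combination'' of finitely many points $a_j \in A_j$. Because $e$ is extreme in $K$ and each $a_j \in A_j \subseteq K$, extremality forces $a_j = e$ for every $j$ appearing with nonzero coefficient. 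Hence $e \in A_j$ for some $j$, i.e. $e \in \overline{\conv}\big(C \cap (c_j + U)\big)$.

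The final step is to derive a contradiction from $e \in \overline{\conv}\big(C \cap (c_j + U)\big)$. The set $C \cap (c_j + U)$ is contained in the translate $c_j + U$ of a convex set, hence in $c_j + \overline{U}$, and since $X$ is uniquely divisible the closed convex hull of a subset of $c_j + \overline{U}$ is again contained in $c_j + \overline{U'}$ for a slightly larger convex neighbourhood $U'$ (one controls the closure using that $A + U'$ is convex for convex $A$, as recorded in the remarks following Theorem~\ref{HB groups strict}); in particular $A_j$ is ``thin'' in the direction of $U$. Since $e \in A_j$, the set $C \cap (c_j + U)$ is nonempty, so pick $c \in C \cap (c_j + U)$; then both $e$ and $c$ lie in $c_j + U'$, whence $e \in c + U' - U' \subseteq c + U + U$ for an appropriate choice of the neighbourhoods, contradicting $e + U + U \cap C = \emptyset$ together with $c \in C$. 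Therefore no such $e$ exists and $\E(K) \subseteq C$.

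The main obstacle I expect is the bookkeeping in the last paragraph: in a general group one cannot simply subtract, and ``diameter'' estimates must be phrased through additive neighbourhoods of $0$ and the dilation calculus of Section~\ref{sec separate} (Proposition~\ref{prop monotone}), while being careful that closed convex hulls of small sets stay small — this is where unique divisibility is essential, since it is what guarantees (via the remarks after Theorem~\ref{HB groups strict}) that closures of convex sets are convex and that $A_j + U'$ behaves well. The rest of the argument is a routine transcription of the classical proof once Corollary~\ref{prop conv union} is in hand.
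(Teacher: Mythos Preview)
Your approach is essentially the paper's: cover $C$ by finitely many translates of a small closed convex neighbourhood, set $A_j = \overline{\conv}\big(C\cap(c_j+\overline U)\big)$, invoke Corollary~\ref{prop conv union} to get $K=\conv\big(\bigcup_j A_j\big)$, regroup a convex combination of $e$ into one summand per $A_j$ using divisibility and convexity of each $A_j$, and apply extremality to force $e\in A_j$ for some $j$.

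Where you diverge, and overcomplicate, is the last paragraph. You do not need $U+U$, a larger $U'$, an auxiliary point $c\in C\cap(c_j+U)$, or any ``diameter'' bookkeeping. The paper argues directly: since translates of convex sets are convex and closures of convex sets are convex in a divisible locally convex group (the remarks following Theorem~\ref{HB groups strict}), the set $c_j+\overline U$ is itself closed and convex, hence
\[
A_j=\overline{\conv}\big(C\cap(c_j+\overline U)\big)\subseteq c_j+\overline U\subseteq C+\overline U.
\]
Thus $e\in C+\overline U$ for \emph{every} convex neighbourhood $U$ of $0$, and since $C$ is closed this gives $e\in C$. Running the proof directly (rather than by contradiction with a fixed $U$) is what makes this clean; your hedging with $U'$ suggests you did not quite see that $c_j+\overline U$ is already closed convex, which is exactly the point where unique divisibility enters.
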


\begin{proof}
Let $x\in \E\left(\,\overline{\conv}(C)\right)$. Let $U$ be a convex neighbourhood of $0$. Since $C$ is compact, there exist finitely many $x_1,\dots,x_n\in C$ such that $C\subseteq \bigcup\big(x_i+\overline U\big)$. Define
\begin{align*}
A_i = \overline{\conv}\left(C\cap (x_i+\overline U)\right).
\end{align*} 
Since $\overline{\conv}(C)$ is assumed compact, it follows that $A_i$ is compact for each $1\le i \le n$. Also, for each $1\le i \le n$, since $C\cap (x_i+\overline U)\subseteq C$, we have $A_i\subseteq \overline{\conv(C)}$, and so $\conv\left(\bigcup_{i=1}^nA_i\right) \subseteq \overline{\conv}(C)$. 
On the other hand, since $C\subseteq \bigcup_{i=1}^n\big(x_i+\overline U\big)$, it follows that $\conv\left(\bigcup_{i=1}^nA_i\right) \supseteq C$.

 Finally, by Proposition~\ref{prop conv union}, we have that $\conv\left(\bigcup_{i=1}^nA_i\right)$ is compact and therefore closed. Altogether, we have $\conv\left(\bigcup_{i=1}^nA_i\right) = \overline{\conv}(C)$. Hence, there exist $m_1,\dots,m_n\in \N\cup\{0\}$ and $x_i\in A_i$ such that $mx=\sum_{i=1}^nm_ix_i$, and $m=\sum_{i=1}^nm_i$. Since $x\in \E\left(\,\overline{\conv}(C)\right)$, we must have $x_1=\dots=x_n=x$. Thus, $x\in A_i\subseteq x_i+\overline{U} \subseteq C+\overline U$. Since $C$ is closed and $U$ is arbitrary, it follows that $x\in C$. This concludes the proof.
\end{proof}

We remark that working in a group shows that many  components of the proof have separate requirements all of which are automatic in a locally convex topological vector space or Banach space. 
\section{Minimax theorem for monoids}\label{sec minimax}

We turn to the proof of a minimax theorem in monoids.

\begin{definition} Let $X$ be a monoid.
A function $f:X\to \R$ is said to be convex-like, if for every $x,y\in X$ and for every $\mu\in[0,1]$, there exist $z\in X$ such that
\begin{align*}
f(z) \le \mu f(x)+(1-\mu)f(y).
\end{align*}
Again, $g$ is said to be concave-like exactly when -$g$ is convex-like.
\end{definition}

The next result most satisfactorily connects convexity of a function in a monoid to abstract convex-likeness.

\begin{prop}\label{prop conv like}
Assume that $X$ is a $p$-semidivisible topological monoid such that for every $x,y\in X$, the set $\conv(\{x,y\})$ is precompact . Assume that $f:X\to [-\infty,\infty)$ is convex and lower semicontinuous. Then $f$ is convex-like. If, instead, we assume that $f$ is concave and upper semicontinuous, then $f$ is concave-like.
\end{prop}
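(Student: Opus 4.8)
The statement to prove is Proposition~\ref{prop conv like}: in a $p$-semidivisible topological monoid where $\conv(\{x,y\})$ is always precompact, a convex lower semicontinuous function $f:X\to[-\infty,\infty)$ is convex-like. The plan is to first prove the conclusion for dyadic-type rational coefficients $\mu$ of the form $k/p^n$ using only convexity and $p$-semidivisibility, and then pass to arbitrary $\mu\in[0,1]$ by a density-plus-semicontinuity argument, using precompactness of $\conv(\{x,y\})$ to extract a convergent subnet and lower semicontinuity to preserve the inequality in the limit.

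First I would fix $x,y\in X$ and $\mu\in[0,1]$ and aim to produce $z$ with $f(z)\le\mu f(x)+(1-\mu)f(y)$. The key algebraic step: for any $n\in\N$ and any $k$ with $0\le k\le p^n$, since $X$ is $p$-semidivisible there exists $z_{k,n}\in X$ with $p^n z_{k,n}=k\,x+(p^n-k)\,y$. Because $f$ is convex and $p^n=\sum m_i$ with the obvious grouping ($k$ copies of $x$ and $p^n-k$ copies of $y$), convexity gives $p^n f(z_{k,n})\le k f(x)+(p^n-k)f(y)$, i.e. $f(z_{k,n})\le \tfrac{k}{p^n}f(x)+(1-\tfrac{k}{p^n})f(y)$. (Here one must note $f(x),f(y)<\infty$, so the right-hand side is well-defined in $[-\infty,\infty)$; the case $f(x)=-\infty$ or $f(y)=-\infty$ forces the right-hand side to $-\infty$ and needs a brief separate remark, or one observes the same inequality still reads correctly with the convention that a convex function bounded above by $-\infty$ coefficients behaves accordingly.) This already settles all $\mu=k/p^n$, which form a dense subset of $[0,1]$.

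For general $\mu\in[0,1]$, choose $k_n/p^n\to\mu$ with $0\le k_n\le p^n$, and set $z_n=z_{k_n,n}$. All $z_n$ satisfy $z_n\in\conv(\{x,y\})$ (indeed $p^n z_n$ is an $\N$-combination of $x,y$), which is precompact by hypothesis, so the net $(z_n)$ has a subnet converging to some $z\in X$ (working in the completion if necessary, but since we only need $z\in X$ one should take the closure of $\conv(\{x,y\})$ inside $X$; precompactness here should be read as having compact closure in $X$, so $z\in X$). Lower semicontinuity of $f$ then gives
\[
f(z)\le\liminf f(z_n)\le\liminf\Big(\tfrac{k_n}{p^n}f(x)+\big(1-\tfrac{k_n}{p^n}\big)f(y)\Big)=\mu f(x)+(1-\mu)f(y),
\]
which is exactly convex-likeness. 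The concave-like case follows by applying the convex case to $-f$, which is convex and upper semicontinuous becomes lower semicontinuous, noting the sign conventions for $\infty-\infty$ stated in the paper.

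The main obstacle I anticipate is the limiting step: lower semicontinuity only controls $f$ from below along convergent nets, so one must be careful that it is $f(z)\le\liminf f(z_n)$ (the correct direction) and that the subnet is genuinely extracted from the precompact set $\conv(\{x,y\})$ with limit lying in $X$—this is why the precompactness hypothesis is imposed, and getting the topological bookkeeping right (nets versus sequences, closure taken inside $X$) is the delicate part. A secondary nuisance is handling the values $f(x)=-\infty$ or $f(y)=-\infty$ so that the target inequality is interpreted consistently; this is cosmetic but should be stated. Everything else is a direct application of the definition of convexity in a monoid together with $p$-semidivisibility.
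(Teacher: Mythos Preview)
Your proposal is correct and follows essentially the same route as the paper's proof: use $p$-semidivisibility to produce points $z_n\in\conv(\{x,y\})$ with $p^n z_n = k_n x + (p^n-k_n)y$ and $k_n/p^n\to\mu$, apply convexity to bound $f(z_n)$, extract a convergent subnet via precompactness, and finish with lower semicontinuity. The paper is slightly terser (it does not separate out the case of $p$-adic rational $\mu$ as you do, since the limiting argument already covers it), but the ideas and the order of steps are the same.
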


\begin{proof}
Let $x,y\in X$ and $\mu\in[0,1]$. For every $k\in \N$, we can find $m_k\in \N$ and $z_k \in X$ such that $p^kz_k = m_kx+(p^k-y)y$, and $\mu \le \frac {m_k}{p^k} \le \mu +\frac 1 {p^k}$. Such a $z_k$ exists because $X$ is $p$-semidivisible. Now, since $f$ is convex, we have 
\begin{align*}
p^kf(z_k) \le m_kf(x)+(p^k-m_k)f(y),
\end{align*}
and so 
\begin{align*}
f(z_k) \le \frac{m_k}{p^k}f(x)+\left(1-\frac{m_k}{p^k}\right)f(y) \le \left(\mu+\frac 1 {p^k}\right)f(x)+(1-\mu)f(y).
\end{align*}
Since $\conv(\{x,y\})$ is assumed to be precompact, by passing to a subsequence, we may assume without loss of generality that $z_k\to z$. Then, by the semicontinuity of $f$, we get
\begin{align*}
f(z)\le \liminf_{k\to \infty}f(z_k) \le \mu f(x)+(1-\mu)f(y), 
\end{align*}
which completes the proof of the first assertion. The proof of the second assertion follows from replacing $f$ by $-f$. The proof is therefore complete.
\end{proof}

The following theorem was proved in~\cite{BZ86} by easy Lagrange multiplier techniques.

\begin{thm}[Fan's Theorem A in~\cite{BZ86}]
Suppose that $X$ and $Y$ are non-empty sets with $f$ convex-concave-like on $X \times Y$. Suppose that $X$ is compact and $f(\cdot, y)$ is lower semicontinuous on $X$ for each $y \in Y$. Then
\begin{align*}
\min_{x\in X}\sup_{y\in Y}f(x,y) = \sup_{y\in Y}\min_{x\in X}f(x,y).
\end{align*}
\end{thm}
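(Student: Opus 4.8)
The plan is to use the Lagrange-multiplier / separation argument of \cite{BZ86}. One inequality is free: for any $x_0\in X$ and $y_0\in Y$ we have $\sup_{y}f(x_0,y)\ge f(x_0,y_0)\ge \min_{x}f(x,y_0)$, so taking the infimum over $x_0$ on the left and the supremum over $y_0$ on the right gives $\min_{x}\sup_{y}f(x,y)\ge \sup_{y}\min_{x}f(x,y)$; here the left-hand infimum is indeed attained because $x\mapsto \sup_{y}f(x,y)$ is a supremum of lower semicontinuous functions, hence lower semicontinuous on the compact space $X$. It remains to prove the reverse inequality. Assume first that $\mu_0:=\min_{x}\sup_{y}f(x,y)$ is finite (the case $\mu_0=+\infty$ is handled by the same argument below with $\lambda$ replaced by an arbitrarily large constant), suppose for contradiction that $q:=\sup_{y}\min_{x}f(x,y)<\mu_0$, and fix $\lambda$ with $q<\lambda<\mu_0$.

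Step 1: reduce to finitely many $y$'s. For each $y\in Y$ the set $\{x\in X:f(x,y)>\lambda\}$ is open by lower semicontinuity of $f(\cdot,y)$, and these sets cover $X$ since $\sup_{y}f(x,y)\ge\mu_0>\lambda$ for every $x$. By compactness of $X$ there are $y_1,\dots,y_n\in Y$ such that for every $x\in X$ at least one of $f(x,y_1),\dots,f(x,y_n)$ exceeds $\lambda$; equivalently, writing $g(x)=(f(x,y_1),\dots,f(x,y_n))\in\R^n$, there is no $x\in X$ with $g(x)\le\lambda\mathbf 1$ coordinatewise.

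Step 2: separate in $\R^n$ using convex-likeness in $x$. The set $K=g(X)+\R^n_+$ is convex: convex-likeness of $f$ in the first variable, applied simultaneously to $y_1,\dots,y_n$, shows that any convex combination of two points of $g(X)$ dominates some $g(z)$ coordinatewise, and adding $\R^n_+$ preserves this. By Step 1, $\lambda\mathbf 1\notin K$, so by the finite-dimensional separation theorem there is a nonzero $\mu=(\mu_1,\dots,\mu_n)$ with $\langle\mu,k\rangle\ge\langle\mu,\lambda\mathbf 1\rangle$ for all $k\in K$. Because $K+\R^n_+=K$, each $\mu_i\ge 0$; normalising so that $\sum_i\mu_i=1$, we obtain $\sum_i\mu_if(x,y_i)=\langle\mu,g(x)\rangle\ge\lambda$ for every $x\in X$.

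Step 3: collapse the multipliers using concave-likeness in $y$. An easy induction on $n$ that promotes the two-point concave-likeness to $n$-point convex combinations produces $\bar y\in Y$ with $f(x,\bar y)\ge\sum_i\mu_if(x,y_i)$ for all $x\in X$. Since finite nonnegative combinations of lower semicontinuous functions are lower semicontinuous and $X$ is compact, $\min_{x}f(x,\bar y)\ge\min_{x}\sum_i\mu_if(x,y_i)\ge\lambda$, whence $q=\sup_{y}\min_{x}f(x,y)\ge\lambda$, contradicting $\lambda>q$. I expect the separation in Step 2 to be the main point requiring care: one must use the recession property $K+\R^n_+=K$ to force the separating functional to be nonnegative, and note that $K$ need not be closed, so only a supporting (non-strict) separation of $\lambda\mathbf 1$ from $K$ is available — which is exactly enough, since we only need the inequality $\sum_i\mu_if(x,y_i)\ge\lambda$. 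Steps 1 and 3 are routine once the lower semicontinuity and compactness hypotheses are used as above.
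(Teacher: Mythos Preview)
The paper does not actually prove this statement: it is quoted from \cite{BZ86} with only the remark that it is obtained there ``by easy Lagrange multiplier techniques.'' Your proposal is a correct reconstruction of exactly that argument --- compactness plus lower semicontinuity to reduce to finitely many $y_i$, finite-dimensional separation of $\lambda\mathbf 1$ from $g(X)+\R^n_+$ to produce nonnegative multipliers $\mu_i$, and then concave-likeness in $y$ to absorb the convex combination into a single $\bar y$. The one point worth making explicit is that ``convex-concave-like on $X\times Y$'' must be read in Fan's uniform sense (for each $x_1,x_2,\mu$ a \emph{single} $z$ works for all $y$ simultaneously, and dually); this is what makes $g(X)+\R^n_+$ convex in Step~2 and what makes the induction in Step~3 go through, and it is indeed the standard meaning in \cite{Fan53,BZ86}. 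With that understood, the argument is complete; the comment about lower semicontinuity of $\sum_i\mu_if(\cdot,y_i)$ in Step~3 is unnecessary, since you already have the pointwise bound $\sum_i\mu_if(x,y_i)\ge\lambda$ for every $x$.
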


Thus, using Proposition~\ref{prop conv like}, we immediately obtain the following.

\begin{thm}[Minimax formula for partially divisible topological monoids]\label{thm minimax}
Assume that $X$ is a convex and compact subset of a $p$-divisible topological monoid, and $Y$ is a subset of a $q$-divisible topological monoid such that for every $x,y\in Y$, $\conv(\{x,y\})$ is precompact (as holds if, for example, $Y$ is compact and convex). Assume that $f:X\times Y\to \R$ is such that for every $y\in Y$, $f(\cdot,y)$ is convex and lower semicontinuous on $X$, and for every $x\in X$, $f(x,\cdot)$ is concave and upper semicontinuous. Then 
\begin{align}\label{eq minimax}
\min_{x\in X}\sup_{y\in Y}f(x,y) = \sup_{y\in Y}\min_{x\in X}f(x,y).
\end{align}
\end{thm}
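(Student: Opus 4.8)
The plan is to reduce Theorem~\ref{thm minimax} to Fan's Theorem A (Theorem~Fan's Theorem~A in \cite{BZ86}) by verifying that its three hypotheses are met: that $X$ is compact, that $f(\cdot,y)$ is lower semicontinuous on $X$ for each $y\in Y$, and that $f$ is convex-concave-like on $X\times Y$. The first is assumed outright, and the second is part of the hypothesis. So the only real content is the convex-concave-likeness, and this is exactly where Proposition~\ref{prop conv like} does the work.

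First I would unpack what convex-concave-like means: $f(\cdot,y)$ should be convex-like on $X$ for each fixed $y$, and $f(x,\cdot)$ should be concave-like on $Y$ for each fixed $x$. For the first, fix $y\in Y$ and apply Proposition~\ref{prop conv like} to the function $f(\cdot,y)\colon X\to[-\infty,\infty)$. The hypotheses of that proposition are met because $X$ is a convex compact subset of a $p$-divisible (hence $p$-semidivisible) topological monoid, so $\conv(\{x_1,x_2\})\subseteq X$ is precompact for any $x_1,x_2\in X$ (being a subset of the compact set $X$), and $f(\cdot,y)$ is convex and lower semicontinuous by assumption. Proposition~\ref{prop conv like} then yields that $f(\cdot,y)$ is convex-like. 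For the second, fix $x\in X$ and apply the concave half of Proposition~\ref{prop conv like} to $f(x,\cdot)\colon Y\to\R$: here $Y$ sits inside a $q$-divisible topological monoid, the precompactness of $\conv(\{y_1,y_2\})$ for $y_1,y_2\in Y$ is assumed directly (and holds in particular when $Y$ is compact and convex), and $f(x,\cdot)$ is concave and upper semicontinuous by hypothesis, so $f(x,\cdot)$ is concave-like. Combining the two gives that $f$ is convex-concave-like on $X\times Y$.

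With all three hypotheses of Fan's theorem verified, I would simply invoke it to conclude
\[\min_{x\in X}\sup_{y\in Y}f(x,y) = \sup_{y\in Y}\min_{x\in X}f(x,y),\]
which is~\eqref{eq minimax}. One small point worth noting explicitly is that the minimum on the left is attained, which is justified by compactness of $X$ together with lower semicontinuity of $x\mapsto\sup_{y\in Y}f(x,y)$ (a supremum of lower semicontinuous functions is lower semicontinuous), matching the ``$\min$'' in Fan's statement.

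I do not expect a serious obstacle here, since the theorem is explicitly billed as an immediate consequence of Proposition~\ref{prop conv like}. The only place that requires a moment's care is matching the precompactness hypothesis of Proposition~\ref{prop conv like} to what is assumed about $X$ and $Y$ separately: for $X$ it follows for free from compactness of $X$, whereas for $Y$ it must be assumed (or derived from $Y$ being compact and convex). Making sure the $p$-divisible/$q$-divisible bookkeeping lines up with the semidivisibility required by Proposition~\ref{prop conv like} is the other routine check, but since $p$-divisible implies $p$-semidivisible this is immediate.
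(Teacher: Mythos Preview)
Your proposal is correct and follows exactly the route the paper takes: the theorem is stated as an immediate consequence of Proposition~\ref{prop conv like} combined with Fan's Theorem~A, and you have spelled out precisely the verification of the convex-concave-like hypothesis that the paper leaves implicit. Your additional remarks on why the precompactness hypothesis is satisfied for $X$ (via compactness) versus assumed for $Y$, and on the attainment of the minimum, are accurate and only make the argument more complete.
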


\begin{example}[Minimax theorem in the positive hyperbolic group]
Let $X$ be the (positive) hyperbolic group, as defined in Example~\ref{ex hyper}. Let $\Lambda:\R\to X$ be the map defined in Example~\ref{ex hyper km}. Then if $\alpha,\beta\in \R$, we have $\conv\big(\{M(\alpha),M(\beta)\}\big) \subseteq \Lambda\big([\alpha,\beta]\big)$. Since it was shown in Example~\ref{ex hyper km} that $\Lambda$ is continuous, it follows that $\Lambda\big([\alpha,\beta]\big)$ is compact and therefore $\conv\big(\{M(\alpha),M(\beta)\}\big)$ is precompact. Hence, if $C\subseteq X$ is compact and convex and $f:C\times X\to \R$ is such that for every $y\in Y$, $f(\cdot,y)$ is convex and lower semicontinuous on $X$, and for every $x\in X$, $f(x,\cdot)$ is concave and upper semicontinuous, then by Theorem~\ref{thm minimax}, equation~\eqref{eq minimax} holds. 
\qede
\end{example}

\begin{remark} Continuing with the notation of Example \ref{ex hyper km}, we
note that in general we do not have $\conv\big(\{M(\alpha),M(\beta)\}\big) = \Lambda\big([\alpha,\beta]\big)$. For example, if $\alpha=0$ and $\beta=1$, then 
\begin{align*}
\conv\big(\{M(\alpha),M(\beta)\}\big) = \big\{M(\theta)~\big|~\theta\in \Q\cap[0,1]\big\}.
\end{align*}
Thus, when the underlying scalars are incomplete we cannot hope for the convex hull of a pair of points to be anything better than a precompact set. \qede
\end{remark}

\begin{example}[Saddle functions on the positive hyperbolic group]
Let $X$ be the (positive) hyperbolic group, as defined in Example~\ref{ex hyper}. Using~\cite{BG15}, it is known that if $g:\R\to \R$ is convex, then $f:X\to \R$, given by $f(x) = g\left(\Lambda^{-1}(x)\right)$, is also convex. Similarly, if $g:\R^2\to \R$ is convex in the first variable and concave in the second variable, then $f:X\times X\to \R$ given $f(x,y) = g\left(\Lambda^{-1}(x), \Lambda^{-1}(y)\right)$ will have the same saddle properties. Also, since both $\Lambda$ and $\Lambda^{-1}$ are continuous, the continuity properties of $g$ will be inherited by $f$. So if we choose for example $f(x,y) = \left(\Lambda^{-1}(x)\right)^2-\left(\Lambda^{-1}(y)\right)^2$ (i.e., $g(\theta_1,\theta_2) = \theta_1^2-\theta_2^2$), and $C = \Lambda([-1,1])$, then
\begin{align*}
\min_{x\in C}\sup_{y\in X}\Big[\left(\Lambda^{-1}(x)\right)^2-\left(\Lambda^{-1}(y)\right)^2\Big] = \sup_{y\in X}\min_{x\in C}\Big[\left(\Lambda^{-1}(x)\right)^2-\left(\Lambda^{-1}(y)\right)^2\Big].
\end{align*}
Note, however, that not all convex functions on $X$ are of the form explicit form  $g\left(\Lambda^{-1}(x)\right)$, where $g:\R\to \R$ is convex. \qede
\end{example}

\section{Conclusion} We hope that the results we have presented make the case well that it is useful to study locally convex groups and matroids.
In our opinion it both opens up new pathways and sheds new light on old structures.

\end{document}